\newtheorem{thm}{Theorem}[section]
\newtheorem{lemma}[thm]{Lemma}
\newtheorem{prop}[thm]{Proposition}
\newtheorem{claim}[thm]{Claim}
\theoremstyle{remark}
\newtheorem{remark}[thm]{Remark}
\theoremstyle{definition}
\newtheorem{defi}[thm]{Definition}
\newtheorem{example}[thm]{Example}
\newcommand{\la}{\longrightarrow}
\newcommand{\ha}{\hookrightarrow}
\newcommand{\ov}{\overline}
\newcommand{\codim}{\operatorname{codim}}
\newcommand{\Div}{\operatorname{Div}}
\newcommand{\supp}{\operatorname{Supp}}
\newcommand{\Pic}{\operatorname{Pic}}
\newcommand{\Tw}{\operatorname{Tw}}
\newcommand{\Jac}{\operatorname{Jac}}
\newcommand{\Prin}{\operatorname{Prin}}
\newcommand{\ord}{\operatorname{ord}}
\newcommand{\Picphi}{\operatorname{Pic}_{\phi}}
\newcommand{\mdeg}{\underline{\operatorname{deg}}}
\newcommand{\val}{\operatorname{val}}
\def\e{\epsilon}
\def\L{\mathcal L}
\def\O{\mathcal O}
\def\X{\mathcal X}
\newcommand{\Z}{\mathbb{Z}}
\newcommand{\R}{\mathbb{R}}
\newcommand{\N}{\mathbb{N}}
\newcommand{\G}{G}
\def\mo{\underline{0}}
\def\mn{\underline{n}}
\newcommand{\dv}{{\rm div}}
\newcommand{\Mgt}{{M_g^{\rm trop}}}
\newcommand{\Mgb}{\ov{M_g}}
\newcommand{\lp}{\operatorname{loop}}
\newcommand{\hG}{\widehat{G}}
\newcommand{\hGw}{\widehat{G^{\omega}}}
\newcommand{\tG}{\widetilde{G}}
\newcommand{\hD}{\widehat{D}}
\newcommand{\hv}{\widehat{v}}
\newcommand{\tv}{\widetilde{v}}
\newcommand{\te}{\widetilde{e}}
\renewcommand{\div}{\mathrm{div}}
\newcommand{\Ks}{K^{\#}_{\G}}
\newcommand{\rs}{r^{\#}}
\begin{document}
\title{Riemann-Roch theory    for weighted graphs and tropical curves}
\author{Omid Amini and   Lucia Caporaso}
\address{D\'epartement de math\'ematiques et applications. \'Ecole Normale Sup\'erieure, 
  45 Rue d'Ulm, 75005 Paris (France).} \email{oamini@math.ens.fr}
 \address{Dipartimento di Matematica e Fisica,
 Universit\`a Roma Tre,
 Largo S. Leonardo Murialdo 1,
 00146 Roma (Italy).}
 \email{caporaso@mat.uniroma3.it}
 \keywords{Graph, weighted graph, tropical curve, algebraic curve, divisor, Riemann-Roch}
%
\maketitle
\begin{abstract}
We define a divisor theory for graphs and tropical curves endowed with a weight function on the vertices;  we prove that the Riemann-Roch theorem holds in both cases.
We extend   Baker's Specialization Lemma to weighted graphs.
\end{abstract}

\tableofcontents
\section{Introduction}

The notion of vertex weighted graph,  i.e. a graph whose vertices are assigned a non negative integer (the weight), arises naturally in algebraic geometry, as  every
Deligne-Mumford stable curve has an associated weighted ``dual" graph,
and the moduli space of   stable curves, $\Mgb$,  has a stratification with nice properties given by  the loci of curves having a certain weighted graph as dual graph; see \cite{gac}.

On the other hand, and more recently, vertex weighted graphs have appeared in tropical geometry  in the study of  degenerations of tropical curves obtained by letting the lengths of some edges go to zero.
To describe the limits of such families, with the above algebro-geometric picture in mind, one is led to consider metric graphs with a weight function on the vertices keeping track of the cycles that have vanished in the degeneration. Such metric weighted graphs are called weighted tropical curves;
they admit a moduli space, $\Mgt$, whose topological properties have strong similarities with those of $\Mgb$; see \cite{BMV} and \cite{CHBK}.

The connections between the algebraic and the tropical theory of curves have been the subject of much attention in latest times, and the topic presents a variety of interesting open problems. 
Moreover, the combinatorial skeleton of the theory,   its graph-theoretic side, has been studied in the weightless case independently of the tropical structure;   also in this setting the analogies with the
classical theory of algebraic curves are quite compelling; see \cite{BN} and \cite{BN2}.

In this paper we are interested in divisor theory. 
For graphs and tropical curves with no weights   the theory has been founded so that there are good notions of linear equivalence, canonical divisor, and rank of a divisor. One of the most important facts, as in algebraic geometry, is the Riemann-Roch theorem for the rank, which has been proved in \cite{BN} for loopless, weightless graphs, and in \cite{GK} and \cite{MZ} for weightless tropical curves.

The combinatorial theory is linked to the algebro-geometric theory not only by the formal analogies. Indeed, 
a  remarkable fact that connects the two theories is Baker's Specialization Lemma, of \cite
{bakersp}.
This result has been    applied in \cite{CDPR} to obtain a new proof of the famous Brill-Noether theorem for algebraic curves, in \cite
{bakersp} to prove   the Existence theorem (i.e., the non-emptyness of the Brill-Noether loci
when the Brill-Noether number is non-negative)    for weightless tropical curves,
and in \cite{CBNgraph},   strengthened by generalizing to graphs admitting loops (corresponding to the 
situation where the irreducible components of the special fiber could have nodal 
singularities),  to prove   the Existence theorem   for weightless graphs.
A   Specialization Lemma valid also for weighted graphs could be applied to relate the Brill-Noether loci of $\Mgb$ with those of $\Mgt$, or to characterize singular stable curves  that lie in the  Brill-Noether loci
(a well known open problem).  

The main goal of this paper is to set up the divisor theory for weighted graphs and tropical curves, and to  extend     the above results.  We hope in this way to prompt future developments
in tropical Brill-Noether theory; see  \cite{len}, for example.

We begin by giving a geometric interpretation of 
the weight structure; namely, we associate to every weighted graph a certain weightless graph, and to every weighted  tropical curve what we call a ``pseudo-metric" graph. In both cases, the weight of a vertex is given  a geometric interpretation using  certain ``virtual" cycles attached to that vertex;
in the tropical case such cycles have length zero,
so that  weighted tropical curves  bijectively correspond to pseudo-metric graphs; see Proposition~\ref{pseudo}.
Intuitively, from the algebro-geometric point of view where a graph is viewed as the dual graph of an algebraic curve,  the operation of adding virtual loops at a
vertex  corresponds to 
degenerating  the
  irreducible component corresponding to that vertex to a rational curve with a certain
number (equal to the weight of the vertex) of nodes,
 while breaking a loop  by inserting a new vertex 
translates, as in the weightless case, into  ``blowing up"   the node corresponding to the loop.

With these definitions  we prove that the Riemann-Roch theorem holds;
see Theorem~\ref{wRR} for graphs, and Theorem~\ref{RRwc} for tropical curves.  
Furthermore, we prove, in Theorem~\ref{spe}, that   
the   Specialization Lemma  holds   in a more general form taking into account the weighted structure. 
We note that this is a stronger fact than the   specialization lemma 
for weightless graphs~\cite{BN, CBNgraph}.   
For example, in the simplest case of a weighted graph consisting of a 
unique vertex without any edge, the inequalities of~\cite{BN, CBNgraph} become trivial, 
while the weighted specialization theorem we prove in this paper is equivalent to Clifford's inequality 
for irreducible curves. Moreover, one   easily sees that 
the operation of adding loops can only result in 
decreasing the rank of a given divisor, so our weighted specialization lemma gives 
stronger inequalities and more information on degeneration of line bundles. 
In fact, the proof of our result is not a simple consequence of the weightless case, and the argument requires some non-trivial algebro-geometric steps.

\ 

We wish to express our gratitude to Matt Baker for many stimulating conversations about  the contents of this paper, and to Yoav Len for pointing out a gap in an earlier proof of Theorem~\ref{spe}.
We also thank the referee for a very  accurate report.

\section{Preliminaries}

 \subsection{Divisor theory on graphs}
 \label{graphprel}
Graphs are assumed to be connected, unless otherwise specified.
We here extend the set-up of  \cite{BN} and \cite{bakersp} 
to graphs with loops. Our notation is non-sensitive to the presence or non-presence of loops.
 
Let $G$ be a graph and
  $V(\G)$ the set of its vertices.
  The group of divisors of $\G$, denoted by $\Div (\G)$, is the free abelian group generated by $V(G)$:
  $$
\Div (\G):=\{\sum _{v\in V(\G)}n_vv,\  n_v\in  \Z \}. 
  $$
For $D\in \Div (\G)$ we write
 $D=\sum _{v\in V(\G)}D(v)v
 $ where $D(v)\in \Z$.
For example, if $D=v_0$ for some $v_0\in V(\G)$, we have
\begin{displaymath}
v_0(v)=\left\{ \begin{array}{ll}
1  \text{ if }   v= v_0\\
0 \text{ otherwise.}\\
\end{array}\right.
\end{displaymath}

The degree of a divisor $D$ is  $\deg D:=\sum_{v\in V(\G)} D(v)$.
We say that $D$ is {\it effective}, and write $D\geq 0$, if $D(v)\geq 0$ for all $v\in V(\G)$.
We denote by 
 $\Div_+(\G)$ the set of effective divisors, and by 
$\Div ^d(\G)$ (respectively $\Div_+ ^d(\G)$) the set of divisors (resp. effective divisors) of degree $d$.   
  
  Let $\G$ be a graph and $\iota:H\ha \G$ a subgraph, so that we have
$V(H)\subset V(\G)$.
 For    any $D\in \Div(\G)$  we denote by $D_{H}\in \Div(H)$ 
 the restriction of $D$ to $H$.
 We have a natural injective  homomorphism
\begin{equation}
\label{notres}
\iota_*: \Div(H)\la \Div (G); \  \  D\mapsto \iota_*D
  \end{equation}
  such that $\iota_*D(v)=D(v)$ for every $v\in V(H)$ and $\iota_*D(u)=0$ for every $v\in V(G)\smallsetminus V(H)$.
 
\

\noindent
{\it Principal divisors.}
We shall now define principal divisors and linear equivalence. We set
\begin{displaymath}
(v\cdot w )=\left\{ \begin{array}{ll}
\text{number of edges joining }v  \text{ and } w&\text{ if } v\neq w\\
- \val(v) +2 \lp(v)&\text{ if } v= w\\
\end{array}\right.
\end{displaymath}
where  $\val(v)$ is the valency of  $v$, and $\lp(v)$ is the number of loops based at $v$.
This extends linearly to a 
 symmetric, bilinear ``intersection" product  
$$
\Div(G) \times \Div(G) \la \Z.
$$
Clearly, this product does not change if some loops are removed from $G$.

\medskip

For a vertex $v$ of $G$ we denote by 
$T_v\in \Div(G)$
the following divisor
$$
T_v :=\sum_{w\in V(\G)}(v\cdot w )w.
$$ 
Observe that $\deg T_v=0$.

\medskip
The group  $\Prin (\G)$ of {\it principal} divisors of $G$ is  the subgroup  of
 $\Div(\G)$ generated by
all the $T_v$:
$$
\Prin(\G)=<T_v,\  \forall v\in V(\G)>.
$$
We refer to the divisors $T_v$ as the {\it generators} of $\Prin (\G)$.

\medskip
For any subset $Z\subset V(G)$ we denote by  $T_Z\in \Prin (\G)$
 the divisor
\begin{equation}
\label{defT}
T_Z:=\sum_{v\in Z}T_v.
\end{equation}

 \begin{remark}
\label{oneless}
For any subset $U\subset V(\G)$ such that $|U|=|V(\G)|-1$
 the set $\{T_v,\  v\in U\}$ freely generates $\Prin (\G)$.
\end{remark}
Let us show that the above definition of principal divisors  coincides with the one given in \cite{BN}.
Consider  the set 
$  
k(\G):=\{f:V(\G) \to \Z \} 
 $ of integer valued functions on $V(\G)$.
Then
the divisor associated to $f$ is defined in \cite{BN} as 
$$
\dv (f):=   \sum_{v\in V(G)}\sum_{e=vw\in E(G)}(f(v)-f(w))v,
$$
and these are defined as the  principal divisors  in \cite{BN}. Now,
we have
\begin{eqnarray*}
  \dv (f) \,\, =& \sum_{v\in V(G)}\bigr(\sum_{w\in V(G)\smallsetminus
  v}(f(v)-f(w))(v\cdot w )\bigr)v\\
  \\
  =&\sum_{v\in V(G)}\Bigr[\Bigr(\sum_{w\in V(G)\smallsetminus v}(-f(w)
 (v\cdot w ))\Bigl) -f(v)(v\cdot v)\Bigl]v\\
  \\ =&-\sum_{v\in V(\G)}\Bigr(\sum_{w\in V(G)}f(w) (v\cdot w )\Bigl)v.
 \end{eqnarray*}

\noindent Fix any $v\in V(G)$ and consider the function $f_{v}:V(\G)\to \Z$ 
such that $f_{v}(v)=1$ and $f_{v}(w)=0$ for all $w\in V(\G)\smallsetminus v$.
 Using the above expression for $\dv (f)$ one checks that  
 $
T_{v}=-\dv (f_{v}).
$ As the functions $f_v$ generate $k(G)$, and the divisors $T_v$ generate $\Prin (G)$,
the two definitions of principal divisors are equal.

 \medskip

We say that $D,D'\in \Div (\G)$ are {\it linearly  equivalent}, and write $D\sim D'$, if $D-D'\in \Prin (\G)$.
We denote by
$ 
\Jac ^d(\G)=\Div^d(\G)/\sim$ the set of linear equivalence classes of divisors of degree $d$;
 we set
$$ 
\Jac(\G)= \Div (\G)/\Prin (\G).
$$ 
\begin{remark}
\label{connect}
If $d=0$ 
then $ 
\Jac ^0(\G)$ is a finite group, usually called the {\it Jacobian group} of $\G$.
This group has several other incarnations, most notably 
in combinatorics and algebraic geometry.
We need to explain the conection with \cite{cner}.
If $X_0$ is a nodal curve with dual graph $\G$
(see section~\ref{secspe}), the   elements of $\Prin (\G)$ correspond to the
multidegrees of some distinguished divisors  on $X_0$, called {\it twisters}. This explains why we denote by a decorated ``$T$" the elements of $\Prin (\G)$.
See \ref{connect2} for more details.
The Jacobian group $ 
\Jac ^0(\G)$ is the
same as the {\it degree class group} $\Delta_X$ of \cite{cner}; similarly, we have 
$ 
\Jac ^d(\G)=\Delta^d_X.
$ 
\end{remark}

Let $D\in \Div(\G)$;
in analogy with algebraic geometry, one denotes by 
$$ 
 |D\ |:=\{E\in \Div _+(\G): E\sim D\}
$$ the set of effective divisors equivalent to $D$.
Next, 
the  {\it   rank}, $r_{G}(D)$, of $D\in \Div(\G)$ is defined as follows. 
If $ |D|=\emptyset $ we set $r_{G}(D)=-1$. Otherwise we define
\begin{equation} 
\label{rank}
r_{G}(D):=\max\{k\geq 0: \  \forall  E\in \Div^k_+(\G) \   \  |D-E| \neq \emptyset \}.
\end{equation}

\begin{remark}
 The following facts follow directly from the definition.

\noindent If $D\sim D'$, then $r_{G}(D)=r_{G}(D')$.

\noindent If $\deg D<0$, then $r_{G}(D)=-1$.
Let $\deg D=0$; then $r_{G}(D)\leq 0$ with equality if and only if $D\in \Prin (\G)$.
\end{remark}
 
\

\noindent {\it Refinements of graphs.}
Let $\tG$ be a graph obtained  by adding a finite set of vertices in the interior of some of the edges of $\G$. We say that $\tG$ is a {\it refinement} of $\G$.
We have a natural inclusion $V(\G)\subset V(\tG)$; denote by $U:=V(\tG)\smallsetminus V(\G)$
the {\it new} vertices of $\tG$.
We have a natural   map
\begin{equation}
\label{notref}
\sigma^*:\Div(G)\la \Div (\tG);\  \  D\mapsto \sigma^*D
\end{equation}
such that $\sigma^*D(v)=D(v)$ for every $v\in V(G)$ and $\sigma^*D(u)=0$ for every $u\in U$.
 It is clear that $\sigma^*$ induces an isomorphism of $ \Div(G)$ with the subgroup of divisors on $\tG$ that vanish on $U$.
 The notation $\sigma^*$ is motivated in remark~\ref{pushrk}.
 
A particular case that we shall use a few times is that of a refinement of $G$ obtained by adding the same number,
$n$,
of vertices in the interior of every edge;
we denote by $G^{(n)}$ this graph, and refer to it as the {\it $n$-subdivision} of $G$.

\begin{remark}
\label{pushrk} 
Let $G$ be a graph and $e\in E(G)$ a fixed edge.
Let   $\tG$ be the refinement    obtained   by inserting only one vertex,
$\tv$,
in the interior   $e$. Let $v_1,v_2\in V(G)$ be the end-points of $e$, so that they are  also vertices of $\tG$. Note that $\tG$ has a unique edge $\te_1$ joining $v_1$ to $\tv$, and a unique edge  $\te_2$ joining $v_2$ to $\tv$. Then the contraction
of, say,  $\te_1$   is a   morphism of graphs
  $$
 \sigma :\tG \la G.
 $$
There is a natural pull-back map $ \sigma^*:\Div(G)\to \Div(\tG)$ associated to $\sigma$,
which maps $D\in \Div(G)$ to $\sigma^*D\in \Div(\tG)$ such that
$\sigma^*D(\tv)=0$, and $\sigma^*D$ is equal to $D$ on the remaining vertices of $\tG$,
which are of course identifed with the vertices of $G$.

\noindent By iterating, this construction generalizes to any refinement of $G$.

From this description, we have that the map $\sigma^*$ coincides with the map we defined in 
(\ref{notref}), and also that it does not change if we define it by choosing as $\sigma$
the map contracting $\te_2$ instead of $\te_1$.

In the sequel, we shall sometimes simplify the notation and omit to indicate the map $\sigma^*$, viewing (\ref{notref})
as an inclusion.
\end{remark}

\subsection{Cut vertices}
 \label{cutsubs}
Let $G$ be a graph with a cut vertex, $v$.
Then we can write $G = H_1\vee H_2$ where $H_1$ and $H_2$ are connected subgraphs of $G$ such that $V(H_1)\cap V(H _2)= \{ v\}$ and $E(H_1)\cap E(H _2)= \emptyset$.
We say that $G = H_1\vee H_2$ is a decomposition associated to $v$.
Pick  $D_j\in \Div(H_j)$ for $j=1,2$, then  we   define     $D_1+D_2\in\Div G$  as follows\begin{displaymath}
 (D_1+D_2)(u)=\left\{ \begin{array}{ll}
D_1(v)+D_2(v) &\text{ if } u=v\\
D_1(u) &\text{ if } u\in V(H_1)-\{v\}\\
D_2(u) &\text{ if } u\in V(H_2)-\{v\}.\\
\end{array}\right.
\end{displaymath}

\begin{lemma}
\label{sepvertex}
Let $G$ be a   graph with a cut vertex and let $G = H_1\vee H_2$
be a corresponding decomposition (as described above). Let   $j=1,2$.
\begin{enumerate}
\item
\label{sepvertex1}
The map  below  is a surjective homomorphism with kernel isomorphic to $\Z$
\begin{equation}
\label{summap}
\Div (H_1)\oplus \Div (H_2) \la \Div (G); \quad \quad (D_1,D_2)\mapsto D_1+D_2 
\end{equation}
 and it induces an isomorphism
$ 
\Prin(H_1)\oplus \Prin(H_2)\cong \Prin (\G) 
$ 
and an exact sequence
$$
0\la \Z\la \Jac (H_1)\oplus \Jac (H_2)\la \Jac (G)\la 0.
$$
\item
\label{sepvertex2}  We have a commutative diagram with injective vertical arrows 
$$
\xymatrix{
0 \ar[r] & \Prin(G) \ar[r] & \Div(G) \ar[r]
& \Jac(G) \ar[r] & 0 \\
0 \ar[r]  & \Prin(H_j) \ar[r] \ar@{^{(}->}[u]&  \Div(H_j) \ar[r]\ar@{^{(}->}[u] &  \Jac(H_j) \ar[r] \ar@{^{(}->}[u] & 0 \\
}
$$

\item
\label{sepvertex3}
For every $D_1,D_2$ with $D_j\in \Div (H_j)$, we have
$$
r_{\G}(D_1+D_2)\geq \min \{r_{H_1}(D_1),r_{H_2}(D_2)\}.
$$
\item
\label{sepvertex4} 
For every $D_j\in \Div(H_j)$, we have $r_{H_j}(D_j)\geq r_G(D_j)$.
\end{enumerate}
\end{lemma}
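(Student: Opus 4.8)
The plan is to reduce parts (2)--(4) to the structural statement (1), whose heart is the behaviour of the generators $T_v$ under the decomposition at the cut vertex. For the surjectivity of the sum map in (1) and the identification of its kernel I would argue vertex by vertex: since $V(G)=V(H_1)\cup V(H_2)$ with $V(H_1)\cap V(H_2)=\{v\}$, a divisor $D$ on $G$ is recovered from a pair $(D_1,D_2)$ by setting $D_j(u)=D(u)$ off $v$ and splitting $D(v)$ arbitrarily between $D_1(v)$ and $D_2(v)$; this shows surjectivity and that the kernel consists exactly of the pairs $(kv,-kv)$, hence is $\cong\Z$. For the statement on principal divisors the key computation is that, because $v$ is a cut vertex, no edge joins $H_1$ to $H_2$, so $\val(v)$ and $\lp(v)$ are additive over the two sides and the intersection numbers $(v\cdot u)$ are unchanged on each side; this gives $T_v=T_v^{H_1}+T_v^{H_2}$ (in sum notation), while for $w\neq v$ the generator $T_w$ is supported on the single side containing $w$ and is the image of $T_w^{H_j}$. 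Taking $U_1=V(H_1)\smallsetminus\{v\}$ and $U_2=V(H_2)\smallsetminus\{v\}$, Remark~\ref{oneless} shows that $\{T_w^{H_j}\}_{w\in U_j}$ freely generates $\Prin(H_j)$ and that $\{T_w\}_{w\in U_1\cup U_2}$ freely generates $\Prin(G)$, because $|U_1\cup U_2|=|V(G)|-1$; the sum map carries the first basis bijectively onto the second, yielding $\Prin(H_1)\oplus\Prin(H_2)\cong\Prin(G)$. The exact sequence for the Jacobians then follows by applying the snake lemma to the sum map between the two defining sequences $0\to\Prin\to\Div\to\Jac\to0$, using that it is an isomorphism on the $\Prin$ terms and has kernel $\Z$ on the $\Div$ terms.

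For part (2), commutativity is immediate from the definitions, and the left and middle vertical maps are induced by $\iota_*$, which is injective. To see that $\iota_*$ sends $\Prin(H_j)$ into $\Prin(G)$ one uses that $\sum_{w\in V(H_j)}T_w^{H_j}=0$ (the intersection product is symmetric and each $T_w$ has degree zero), so $T_v^{H_j}$ is an integral combination of the $T_w^{H_j}$ with $w\neq v$, each of which maps to $T_w\in\Prin(G)$. Injectivity of the right vertical map $\Jac(H_j)\to\Jac(G)$ follows from (1): if $\iota_*D_j\in\Prin(G)$, write it through the isomorphism of (1) as a sum $Q_1+Q_2$ with $Q_i\in\Prin(H_i)$ and compare preimages under the sum map; the component $Q_{j'}$ on the other side is supported at $v$ and principal, hence of degree zero and so zero, forcing $D_j=Q_j\in\Prin(H_j)$.

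Parts (3) and (4) are rank inequalities, proved by testing against effective divisors, and I expect part (4) to be the main obstacle, since passing from $G$ back to a single side is \emph{not} a restriction operation (restriction does not preserve principality). For (3), set $r=\min\{r_{H_1}(D_1),r_{H_2}(D_2)\}$ and take any effective $E$ on $G$ of degree $r$; split $E=E_1+E_2$ with $E_j\geq0$, so that $\deg E_j\leq r\leq r_{H_j}(D_j)$ and hence $D_j-E_j\sim_{H_j}F_j$ for some effective $F_j$. Applying the homomorphism of (1) together with (2) gives $(D_1+D_2)-E\sim_G F_1+F_2\geq0$, so $|D_1+D_2-E|\neq\emptyset$ for every such $E$, i.e. $r_G(D_1+D_2)\geq r$. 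For (4), viewing $D_j$ in $G$, let $r=r_G(D_j)$ and let $E$ be an arbitrary effective divisor on $H_j$ of degree $r$; since the rank in $G$ is $r$ there is an effective $F$ on $G$ with $\iota_*(D_j-E)\sim_G F$. I would split $F=F_1+F_2$ with $F_i\geq0$ and write the principal difference, via (1), as $Q_1+Q_2$ with $Q_i\in\Prin(H_i)$; comparing the two preimages $(D_j-E,\,0)$ and $(F_1+Q_1,\,F_2+Q_2)$ of $\iota_*(D_j-E)$ under the sum map, they differ by a kernel element which is a multiple of $v$ on each side, with opposite signs. The identity on the side $j'\neq j$ then reads $F_{j'}+Q_{j'}=kv$, whence $k=\deg F_{j'}\geq0$, while the identity on side $j$ reads $D_j-E=F_j+Q_j+kv$; since $F_j+kv\geq0$ and $Q_j\in\Prin(H_j)$, this exhibits $D_j-E\sim_{H_j}F_j+kv\geq0$, proving $r_{H_j}(D_j)\geq r$.
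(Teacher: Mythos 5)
Your proof is correct and follows essentially the same route as the paper: the vertex-by-vertex analysis of the sum map with kernel $\Z(v,-v)$, the identification $\Prin(H_1)\oplus\Prin(H_2)\cong\Prin(G)$ via the free generators $T_w$, $w\neq v$, from Remark~\ref{oneless}, and the splitting of effective divisors to get the rank inequalities in (3) and (4). If anything, your bookkeeping in part (4) is more careful than the paper's own: the paper asserts that the $H_{j'}$-component $T_{j'}$ of the principal divisor is zero, which is not forced (only $T_{j'}(v)\leq 0$ follows from effectivity away from $v$ and $\deg T_{j'}=0$, and that is what is actually needed), whereas your explicit comparison of the two preimages modulo the kernel, yielding $F_{j'}+Q_{j'}=kv$ with $k=\deg F_{j'}\geq 0$ and then $D_j-E\sim_{H_j}F_j+kv\geq 0$, closes that gap cleanly.
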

\begin{proof}
Denote $V(H_j)=\{u_1^j,\ldots, u_{n_j}^j, v\}$ and
$V(G)=\{u_1^1,\ldots, u_{n_1}^1, v,u_1^2,\ldots, u_{n_2}^2\}$.

\medskip

(\ref{sepvertex1}). An equivalent way of defining the divisor $D_1+D_2$ is to use the two  maps
$\iota_*^j:\Div (H_j)\to  \Div (G)$ defined in (\ref{notres}). Then we have   $D_1+D_2=\iota_*^1D_1+\iota_*^2D_2$.
With this description,
it is clear that the map in part (\ref{sepvertex1}) is a surjective homomorphism. In addition, the kernel of this
 map has generator $(v , - v) \in \Div(H_1)\oplus \Div(H_2)$  and is thus isomorphic to $\mathbb Z$.

To distinguish the generators of $\Prin(H_j)$ from those of $\Prin (\G)$
we
denote by $T^j_w\in \Prin(H_j)$ the generator corresponding to $w\in V(H_j)$.
We clearly have
$$
\iota_*^jT^j_{u_h^j}=T_{u_h^j}
$$
 for $j=1,2$ and $h=1,\ldots,n_i$.
 As $\Prin(H_j)$ is freely generated by $T^j_{u_1 ^j},\ldots, T^j_{u_{n_j}^j}$  and
 $\Prin (\G)$ is freely generated by $T_{u_1 ^1},\ldots, T_{u_{n_1}^1},T_{u_1 ^2},\ldots, T_{u_{n_2}^2}$, the first part is proved.

Part (\ref{sepvertex2}) also follows from the previous argument.

(\ref{sepvertex3}). Set $r_j=r_{H_j}(D_j)$  and assume $r_1\leq r_2$. Set $D=D_1+D_2$;
to prove that $r_{\G}(D)\geq r_1$ we must show that  for every $E\in \Div_+^{r_1}(\G)$
there exists $T\in \Prin (\G)$ such that $D-E+T\geq 0$.
Pick such an $E$;
let $E_1=E_{H_1}$ and $E_2=E-E_1$, so that $E_2\in \Div H_2$.
Since $\deg E_j\leq r_1\leq r_j$ we have that there exists $T_j\in \Prin(H_j)$ such that
$D_j-E_j+T_j\geq 0$ in $H_j$. By the previous part $T=T_1+T_2\in \Prin (\G)$; let us conclude by showing that
$D-E+T\geq 0$. In fact
$$
D-E+T=D_1+D_2-E_1-E_2+T_1+T_2=(D_1-E_1+T_1)+(D_2-E_2+T_2)\geq 0.
$$

(\ref{sepvertex4}). Assume $j=1$ and set $r=r_G(D_1)$. 
By (\ref{sepvertex2}) we are free to view  $\Div(H_1)$ as a subset of $\Div(G)$.
Pick $E\in \Div^r_+(H_1)$, then
there exists $T\in \Prin(G)$ such that in $G$ we have $D_1-E+T\geq 0$.
By \eqref{sepvertex1} we know that $T=T_1+T_2$ with $T_i\in \Prin(G_i)$; since $D_1(u_h^2)=E(u_h^2)=0$
for all $h=1,\ldots, n_2$ we have that $T_2=0$, hence $D_1-E+T_1\geq 0$ in $ H_1$
 \end{proof}
Now let $G=H_1\vee H_2$ as above and let  $m,n$ be two nonnegative integers; we denote by $G^{(m,n)}$ the graph obtained by inserting
$m$ vertices in the interior of every edge of $H_1$ and $n$ vertices in the interior of every edge of $H_2$. Hence we  can write
  $G^{(m,n)} :=  H_1^{(m)} \vee H_2^{(n)}$ (recall that $H^{(m)}$ denotes the   $m$-subdivision   of a graph $H$).
 We denote by $\sigma^*_{m,n}:\Div(G) \to \Div(G^{(m,n)})$ 
  the natural map.
\begin{prop}
\label{sepref}
Let $G$ be a   graph with a cut vertex and   $G = H_1\vee H_2$
  a corresponding decomposition. Let $m,n$ be non-negative integers and 
  $G^{(m,n)}=H_1^{(m)} \vee H_2^{(n)}$ the corresponding refinement.
  Then  
\begin{enumerate}
\item
\label{sepref1}
 $\sigma^*_{m,n}(\Prin(G))\subset \Prin(G^{(m,n)})$. 
   \item
\label{sepref2}
Assume that $G$ has no loops. Then for every $D\in \Div(G)$, we have 
$$r_G(D) = r_{G^{(m,n)}}(\sigma_{m,n}^*D).$$ \end{enumerate}
\end{prop}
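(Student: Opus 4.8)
\emph{Plan.} For Part (1) it suffices to treat the generators $T_v$, $v\in V(G)$, and I first dispose of a single uniform subdivision: for any graph $H$, any vertex $v$ and any $k\ge 0$, the divisor $\sigma^*T_v$ on $H^{(k)}$ is principal. I would exhibit the integer function $g$ realizing it explicitly: put $g=(k+1)\mathbf 1_v$ on the old vertices, give an interior vertex lying at combinatorial distance $i$ from $v$ on a subdivided non-loop edge issuing from $v$ the value $k+1-i$, and give the value $0$ (resp.\ $k+1$) to the interior vertices of edges not meeting $v$ (resp.\ of loops at $v$). The scaling by $k+1$ is exactly what makes every slope integral, and a direct computation, using harmonicity at the degree-two interior vertices to get the vanishing there, gives $-\dv(g)=\sigma^*T_v$. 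Granting this, Part (1) follows from the cut-vertex decomposition: since $v_0$ is a cut vertex of both $G$ and $G^{(m,n)}=H_1^{(m)}\vee H_2^{(n)}$, Lemma~\ref{sepvertex}(\ref{sepvertex1}) identifies $\Prin(G)\cong\Prin(H_1)\oplus\Prin(H_2)$ and $\Prin(G^{(m,n)})\cong\Prin(H_1^{(m)})\oplus\Prin(H_2^{(n)})$. For $v\neq v_0$ the generator $T_v$ is supported on a single side, so it maps into the corresponding refined $\Prin$ by the uniform case; for $v=v_0$ one uses $\sum_{w\in V(G)}T_w=0$ (the columns of the intersection matrix sum to zero) to write $\sigma^*_{m,n}T_{v_0}=-\sum_{w\neq v_0}\sigma^*_{m,n}T_w$, which is therefore principal. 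Note this argument is insensitive to loops.

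Part (1) in particular makes $\sigma^*_{m,n}$ compatible with linear equivalence, so the rank equality of Part (2) is meaningful; I now reduce it to the uniform case. Using Part (1) to factor the refinement as $H_1\vee H_2\rightsquigarrow H_1^{(m)}\vee H_2\rightsquigarrow H_1^{(m)}\vee H_2^{(n)}$, it suffices, by symmetry of the two sides and composition, to prove the \emph{one-sided} invariance: for a decomposition $K=K_1\vee K_2$ with cut vertex $v_0$ and $K'=K_1^{(m)}\vee K_2$, one has $r_K(D)=r_{K'}(\sigma^*D)$. The engine is the invariance of the rank under uniform subdivision of a single loopless graph, $r_{K_1}(F)=r_{K_1^{(m)}}(\sigma^*F)$ (the case $K_2=\{v_0\}$), which is known for loopless graphs as the combinatorial counterpart of the tropical Riemann--Roch theorem \cite{GK,MZ}.

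The bridge between the two sides is the $v_0$-reduced divisor. The decisive point is that reduction \emph{decouples across the cut vertex}: because $H_1$ and $H_2$ meet only at $v_0$, Dhar's burning from $v_0$ propagates independently in the two pieces, so a divisor $F=F_1+F_2$ is $v_0$-reduced on $K$ iff each $F_j$ is $v_0$-reduced on $K_j$, whence the $v_0$-coefficient of the reduction is additive, $\overline F(v_0)=\overline{F_1}(v_0)+\overline{F_2}(v_0)$ (the free splitting of the chip at $v_0$ does not affect the sum). Since $|F|\neq\emptyset$ iff $\overline F(v_0)\ge 0$, the criterion for $r_K(D)\ge t$ unwinds into a condition on the pair of side-profiles $E_j\mapsto\overline{(D_j-E_j)}(v_0)$, with $E_j$ ranging over effective divisors on $K_j$. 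This profile is exactly what a uniform subdivision of $K_1$ preserves: reduction is compatible with subdivision, so $\overline{(\sigma^*F)}(v_0)=\overline F(v_0)$ for every $F\in\Div(K_1)$, and, crucially, effective test divisors supported on the \emph{new} vertices of $K_1^{(m)}$ are already accounted for by the uniform invariance $r_{K_1}=r_{K_1^{(m)}}$. Matching the two profiles yields both inequalities and hence $r_K(D)=r_{K'}(\sigma^*D)$.

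The main obstacle is precisely this last step. The naive hope that rank is invariant under an \emph{arbitrary} refinement is false—rank genuinely depends on the edge lengths once non-uniform subdivisions are allowed—so both the loopless hypothesis and the cut vertex must be used: the cut vertex is what prevents a cycle from straddling the two differently-subdivided sides, and this is exactly what makes the reduced-divisor profile additive and therefore transportable through a one-sided uniform subdivision. Establishing rigorously the additivity of $v_0$-reduction across the cut vertex, together with its compatibility with subdivision of a single side, is where the real work lies; the two rank inequalities then follow by comparing the matched profiles.
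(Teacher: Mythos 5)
Your Part~(1) is correct and only cosmetically different from the paper's argument: the paper likewise reduces to the generators $T_u$ with $u\neq v_0$ (Remark~\ref{oneless} is exactly your relation $\sum_{w}T_w=0$) and writes $\sigma^*(T_u)$ explicitly as the principal combination $(m+1)T_{\widehat{u}}+\sum_{e}\sum_{i}iT_{w_i^e}$; your function $g$ is precisely the potential of that combination, so the two computations are equivalent.

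Part~(2) is where you genuinely diverge, and it is also where your write-up has its one real soft spot. The paper quotes the $m=n$ case from \cite{HKN} and \cite{luo}, reduces the general case to the single inequality $r_G(D)\le r_{G^{(m,n)}}(\sigma^*_{m,n}D)$ via the composition $r_G(D)\le r_{G^{(m,n)}}(D)\le r_{(G^{(m,n)})^{(n,m)}}(D)=r_{G^{(l,l)}}(D)=r_G(D)$, and then disposes of test divisors supported on new vertices by invoking the rank-determining-set theorem of \cite{luo}. You instead decouple the problem across the cut vertex using $v_0$-reduced divisors. Your structural claims are all correct and checkable with Dhar's criterion: a divisor on $K_1\vee K_2$ is $v_0$-reduced iff both halves are, so $\overline{F}(v_0)$ is additive, and $r_K(D)\ge t$ iff $\mu_1(t_1)+\mu_2(t_2)\ge 0$ for all $t_1+t_2=t$, where $\mu_j(s)=\min_{E_j\in \Div^s_+(K_j)}\overline{(D_j-E_j)}(v_0)$; likewise $\sigma^*$ of a $v_0$-reduced divisor stays $v_0$-reduced. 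The unproved step is exactly the crux: ``effective test divisors supported on the new vertices are already accounted for by the uniform invariance'' is an assertion, not an argument --- the uniform invariance is a statement about ranks, whereas $\mu_1$ is a finer invariant, and it is not obvious that a chip placed at a new vertex cannot lower the profile below its value on old-vertex test divisors. The gap does close: since adding or removing chips at $v_0$ preserves $v_0$-reducedness, one has $\mu_1(s)=\max\{c\in\Z:\ r_{K_1}(D_1-cv_0)\ge s\}$, and the identical formula holds on $K_1^{(m)}$ with $E'$ ranging over \emph{all} effective divisors of degree $s$ there; hence $\mu_1=\mu_1'$ follows from the uniform subdivision invariance applied to each $D_1-cv_0$. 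With that identity inserted your proof is complete. The trade-off: the paper's route is shorter because it outsources the new-vertex issue to \cite{luo}; yours avoids rank-determining sets altogether at the cost of some reduced-divisor bookkeeping, and the reformulation of the profile in terms of $r_{K_1}(D_1-cv_0)$ is the lemma you must actually state and prove.
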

\begin{proof}
 It is clear that it suffices to prove part (\ref{sepref1})  for  $(0,n)$ and $(0,m)$
separately, hence it suffices to prove it for $(0,m)$.
Consider the map (for simplicity we write $\sigma^*=\sigma_{0,m}^*$)
$$ 
\sigma^*:\Div(G)=\Div (H_1 \vee H_2)\to \Div (H_1 \vee H_2^{(m)})=\Div(G^{(0,m)}).
$$ 
The group $\Prin(G)$ is generated by $\{T_u,\  \forall u\in V(G)\smallsetminus \{v\}\}$
(see Remark~\ref{oneless}). Hence it is enough to  prove that  $\sigma^*(T_u)$
is principal for all $u\in  V(G)\smallsetminus \{v\}$.
We denote by $\widehat{u}\in V(G^{(0,m)})$ the vertex corresponding to $u\in V(G)$
via the inclusion $V(G)\subset V(G^{(0,m)})$.

If $u\in V(H_1)\smallsetminus \{v\}$ we clearly  have $\sigma^*(T_u)= T_{\widehat{u}}$,
hence  $\sigma^*(T_u)\in \Prin(G^{(0,m)})$ .

Let $u\in V(H_2)\smallsetminus \{v\}$.  Denote by  $E_u(G)$   the set of edges of $G$ adjacent to $u$ and pick $e\in  E_u(G)$;
as $G^{(0,m)}$ is given by adding $m$ vertices in every edge of $G$, we will   denote the  vertices added in the interior of $e$ by 
$$
\{w_1^{e},\ldots, w_m^{e}\}\subset V(G^{(0,m)}), 
$$ 
ordering $w_1^{e},\ldots, w_m^{e}$ according to the orientation of $e$ which has $u$ as target, so that in $G^{(0,m)}$ we have $(w_m^{e}\cdot \widehat{u})=1$ and $(w_i^{e}\cdot \widehat{u})=0$
if $i<m$ (and  $(w_i^e\cdot w_{i+1}^e)=1$ for all   $i$).
One then easily checks that
$$
\sigma^*(T_u)= (m+1)T_{\widehat{u}}+\sum_{e\in E_u(G)} \sum_{i=1}^miT_{w_i^e};
$$
hence
  $\sigma^*(T_u)\in \Prin(G^{(0,m)})$, and part (\ref{sepref1}) is proved.
  
  Part (\ref{sepref2}).
  First we note that the statement holds in the case $m=n$. Indeed, in this case $G^{(n,n)}=G^{(n)}$   and hence our statement is 
 \cite[Cor. 22]{HKN}; see also \cite[Thm 1.3]{luo}. 

Using this fact, we claim that it will be enough to show only the inequality 
\begin{equation}
\label{ineq*}
r_G(D) \leq  r_{G^{(m,n)}}(\sigma_{m,n}^*D).
\end{equation}
Indeed, suppose this inequality holds for every divisor  $D$ on every graph 
of the form $G=H_1\vee H_2$ and for all pairs of integers $(m,n)$. Pick a divisor $D\in \Div(G)$, 
we   get, 
 omitting the maps $\sigma_{\ldots}^*$ for simplicity (which creates no ambiguity, as the subscript of $r$ already indicates  in which graph  we are computing the rank)
$$
r_G(D)\leq r_{G^{(m,n)}} (D) \leq  
r_{(G^{(m,n)})^{(n,m)}} (D)=
r_{G^{(l,l)}} (D)=
r_G(D) 
$$
where $l=m+n+mn$. (We used the trivial fact that for any graph $H$ and positive integers $h,k$ we have $(H^{(h)})^{(k)}= H^{(h+k+hk)}$).
Hence all the inequalities above must be equalities and the result follows.

Thus, we are left to prove Inequality~\eqref{ineq*}. Let $r=r_{G}(D)$. We have to show that for any effective divisor $E^*$ on $G^{(m,n)}$ of degree $r$
we have
$$
r_{G^{(m,n)}}( \sigma_{m,n}^*D - E^*)\geq 0.
$$
By    \cite[Thm. 1.5]{luo} (or \cite{HKN}), $V(G)$ is a rank-determining set in $G^{(m,n)}$. Therefore it will be enough to show the above claim for divisors of the form  $E^*=\sigma^*_{m,n}E$ for any effective divisor $E$ of degree $r$ on $G$. 
Summarizing, we need to show that for every   $E\in \Div ^r_+(G)$   there exists 
$T\in \Prin (G^{(m,n)})$
   such that 
\begin{equation}
\label{summ}
T + \sigma_{m,n}^*D - \sigma_{m,n}^*E \geq 0.
\end{equation}
Now, since $r=r_G(D)$, there exists  a principal divisor
$\widetilde{T}\in \Prin(G)$ such that
$$
\widetilde{T} + D-E \geq 0.
$$ 
 By 
 the previous part,  $\sigma_{m,n}^*\widetilde{T}$ is a principal divisor of $G^{(m,n)}$;
 set $T:=\sigma_{m,n}^*\widetilde{T}$.
 Then we have
 $$
0\leq   \sigma_{m,n}^*(\widetilde{T} + D-E)=T+  \sigma_{m,n}^*D - \sigma_{m,n}^*E.
$$
Therefore (\ref{summ}) holds, and we are done.
\end{proof}

 \section{Riemann-Roch for weighted  graphs}
  \subsection{Divisor theory for graphs with loops}
 Our goal here is to set up a divisor theory for graphs with loops, so that the Riemann-Roch theorem 
 holds. The Riemann-Roch theorem  has been proved for loopless graphs  
 in
  \cite{BN};
  to generalize it   we shall give a more subtle  definition for the rank 
  and for the canonical divisor.
   
 \begin{defi}
\label{loopless}
Let $\G$ be a graph and let 
  $\{ e_1,\ldots, e_c\} \subset E(G)$ be the set of its loop-edges.
We denote by
 $\hG$ the graph obtained by inserting one  vertex 
 in the interior of  the loop-edge $e_j$, for all $j=1,\ldots,c$.
Since  $V(G)\subset V(\hG)$,  
we have a canonical injective morphism 
\begin{equation}
\label{sigman}
\sigma^*:\Div(\G) \stackrel{}{\la}  \Div(\hG) .
\end{equation}
 We set
\begin{equation}
\label{defr}
\rs_{\G}(D):=r_{\hG}(\sigma^*D),
\end{equation}
and refer to $\rs_{\G}(D)$ as the {\it rank} of $D$.
\end{defi}

The superscript ``$\#$"  is used to avoid confusion with the   definition  which
disregard the loops. We often abuse notation and 
write just $r_{\hG}(D)$ omitting $\sigma^*$.

Observe that $\hG$ is free from loops and has the same genus as $G$.  (Recall that the genus of a 
connected graph $G = (V,E)$ is by definition equal to $|E|-|V|+1$.) 
With the above notation, let 
 $u_j\in V(\hG)$ be the vertex added  in the interior of $e_j$ for all $j=1,\ldots,c$.
It is clear that the map \eqref{sigman}
induces an isomorphism of $\Div(G)$ with 
 the subgroup of divisors $\hD$  on $\hG$ such that $\hD(u_j)=0$ for all  $j=1,\ldots,c$.

\begin{example}
Here is an example in the case $c=1$.
\begin{figure}[h]
\begin{equation*}
\xymatrix@=.5pc{
&&&&&&&&&&&&&&\\
G = &&\ar@{-}@(ul,dl)*{\bullet}\ar @{-}[rrr]\ar @{-} @/_.9pc/[rrr]_(.1)v_(.9)w  \ar@{-} @/^.9pc/[rrr]
&&& *{\bullet} &&&&&&&
{\widehat{G}} = &&*{\bullet} \ar @{-} @/_.9pc/[rrr]_(.1){u_1} \ar@{-} @/^.9pc/[rrr]
&&& *{\bullet}\ar @{-} @/_.9pc/[rrr]_(.1)v_(.9)w \ar@{-} @/^.9pc/[rrr]\ar @{-}[rrr]
&&& *{\bullet} &&&\\
&&&&&&&&&&&&&&\\
}
\end{equation*}
\end{figure}
\end{example}
 \begin{remark}
We have 
\begin{equation}
\label{BI}
r_G(D)\geq \rs_G(D).
\end{equation}
Indeed, let $G_0$ be the graph obtained from $G$ by removing 
all its loop-edges; then, 
by definition, $r_G(D)=r_{G_0}(D)$. On the other hand, by Lemma~\ref{sepvertex} \eqref{sepvertex4},
writing $\hG=G_0\vee H$ for some graph $H$, we have
$r_{G_0}(D) \geq r_{\hG}(D)=\rs_G(D)$, hence \eqref{BI} follows.
\end{remark}

Definition~\ref{loopless} may seem a bit arbitrary, as the choice of the refinement $\hG$ may seem arbitrary. In fact, it is natural to ask whether adding some (positive) number of vertices, different from one,  in the interior of the loop-edges of $G$ can result in a different rank. This turns out not to be the case, as we now show.

 \begin{prop}\label{thm:refinement}
Let $G$ be a graph and
 let $e_1,\ldots, e_c$ be its loop-edges.
For every $\mn=(n_1,\ldots, n_c)\in \N^c$
  let $G^{(\mn)}$  be the refinement of $G$ obtained by inserting $n_i$ vertices
in the interior of $e_i$.
Then for every $D\in \Div G$ we have
$$
\rs_G (D) = r_{G^{(\mn)}}(\sigma^*D) 
$$
where $\sigma^*:\Div(G)\ha \Div(G^{(\mn)})$ is the natural map.
\end{prop}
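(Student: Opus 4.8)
The goal is to show that the rank defined via the single-vertex subdivision $\widehat{G}$ agrees with the rank computed on any refinement $G^{(\mn)}$ obtained by inserting arbitrarily many vertices into each loop-edge. My plan is to reduce this to the loopless statement already available, namely Proposition~\ref{sepref}\eqref{sepref2}, which governs how rank behaves under subdivision of a graph with a cut vertex.

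The key observation is that inserting a vertex into a loop-edge creates a cut vertex, so the graphs in play all decompose as wedges. Concretely, I would first fix one loop-edge at a time and set up the comparison locally. Let $v$ be the base vertex of the loop $e_i$. After inserting one vertex we obtain a loop that has become an edge-pair (a ``banana'' of length two) attached to $v$; more generally, inserting $n_i$ vertices turns $e_i$ into a cycle $C_{n_i+1}$ passing through $v$. The point is that in both $\widehat{G}$ and $G^{(\mn)}$ the vertex $v$ is a cut vertex separating the subdivided loop from the rest of the graph. So I would write $\widehat{G} = G_0 \vee L_1 \vee \cdots \vee L_c$ and $G^{(\mn)} = G_0 \vee L_1' \vee \cdots \vee L_c'$, where $G_0$ is $G$ with all loops deleted, and $L_j, L_j'$ are the subdivided-loop pieces (each a cycle through the corresponding base vertex). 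Since a divisor pulled back from $G$ is supported away from the interior vertices of every loop, its restriction to each loop-piece is just $D(v_j)\, v_j$ supported at the base vertex.

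The heart of the argument is then to apply the cut-vertex machinery repeatedly. Both $\widehat{G}$ and $G^{(\mn)}$ are common refinements: each loop-piece $L_j$ and $L_j'$ is a subdivision of the single cycle $C_{e_j}$ obtained from $e_j$ by inserting exactly one vertex. I would therefore invoke Proposition~\ref{sepref}\eqref{sepref2} at each cut vertex $v_j$: viewing $\widehat{G}$ itself as a graph with cut vertex $v_j$ and a loopless decomposition, subdividing the loop-component further (to reach more vertices) does not change the rank of the pulled-back divisor. Iterating over $j = 1, \ldots, c$, and using that all these subdivisions commute and only add vertices in loop interiors, I would conclude
\[
r_{\widehat{G}}(\sigma^*D) = r_{G^{(\mn)}}(\sigma^*D),
\]
which by Definition~\ref{loopless} is exactly $\rs_G(D) = r_{G^{(\mn)}}(\sigma^*D)$. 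A clean way to organize the iteration is to pass through a common refinement dominating both $\widehat{G}$ and $G^{(\mn)}$ and compare each to it, exactly as in the telescoping inequality argument used in the proof of Proposition~\ref{sepref}\eqref{sepref2}.

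The main obstacle I anticipate is handling the loopless hypothesis in Proposition~\ref{sepref}\eqref{sepref2}: that result requires $G$ to have no loops, whereas here the whole difficulty comes from loops. The resolution is precisely that $\widehat{G}$ and $G^{(\mn)}$ are \emph{already} loopless (each loop has been broken by at least one inserted vertex), so the cited proposition applies to them rather than to $G$ itself. I must be careful to apply the cut-vertex decomposition to $\widehat{G}$, not to $G$, and to verify that the base vertex $v_j$ genuinely becomes a cut vertex after a single subdivision of the loop. Once the setup correctly places us among loopless graphs with the appropriate cut-vertex structure, the remaining work is the routine commuting-refinements bookkeeping, and the telescoping equality follows as above.
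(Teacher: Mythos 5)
Your proposal is correct and follows essentially the same route as the paper: decompose at the cut vertices created by subdividing the loops, apply Proposition~\ref{sepref}\eqref{sepref2} to the resulting loopless wedges, and compare $\widehat{G}$ and $G^{(\mn)}$ through a common refinement (the paper reduces to one loop at a time and uses $H_1\vee C_{2n+2}$, exploiting $C_2^{(n)}=C_{n+1}^{(1)}=C_{2n+2}$). Note only that $C_{n_j+1}$ is in general \emph{not} a subdivision of the $2$-cycle (e.g.\ $C_3$), so the common-refinement formulation you give at the end is the one that must carry the argument, rather than the claim that each $L_j'$ subdivides $C_{e_j}$.
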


 \begin{proof} It will be enough to prove the proposition for $c=1$ since the general statement can be obtained easily by induction on the number of loop-edges of $G$. 

Let $H_1$ be the graph obtained from $G$ by removing its loop-edge, $e$, and let $v$ be the vertex of $G$ adjacent to $e$. We can thus decompose $G$ with respect to $v$:
$$
G=H_1\vee C_1
$$
where, for $m\geq 1$ we denote by $C_m$ the ``m-cycle", i.e., the 2-regular  graph of genus 1, having $m$ vertices and $m$ edges. Observe that for every $h\geq 1$ we have (recall that $C_m^{(h)}$ denotes the $h$-subdivision of $C_m$)
\begin{equation}
\label{Cmh}
C_m^{(h)}=C_{m(h+1)}.
\end{equation} 
Therefore,
with the notation of Proposition~\ref{sepref}, we have, for every $n\geq 0$,
\begin{equation}
\label{G0n}
G^{(0,n)} = H_1^{(0)}\vee C_{1}^{(n)} =H_1\vee C_{n+1}.
\end{equation} 
 For any divisor $D$ on $G$, by definition, we have 
 $$
\rs_{G}(D) =r_{G^{(0,1)}}(\sigma_{0,1}^* D).
 $$
 So we need to prove that for any $n\geq 1$, 
\begin{equation}
\label{end}
 r_{G^{(0,1)}}(\sigma_{0,1}^*D) = r_{G^{(0,n)}}(\sigma_{0,n}^*D).
\end{equation}  
 This is now a simple consequence of Proposition \ref{sepref} (\ref{sepref2}). Indeed, by applying  it to the loopless graph $G^{(0,1)}=H_1\vee C_2$ and the $n$-subdivision of $C_2$, we get,
 simplifying the notation by omitting the pull-back maps $\sigma^*_{\dots}$ ,
 $$
 r^{\;}_{G^{(0,1)}}(D)= r^{\;}_{(G^{(0,1)})^{(0,n)}}(D)=r_{H_1\vee C_2^{(n)}}(D)=r^{\;}_{H_1\vee C_{2n+2}^{\;}}(D)
 $$
by  (\ref{Cmh}).
On the other hand, applying the proposition a second time  to $G^{(0,n)}=H_1\vee C_{n+1}$ and the $1$-subdivision of $C_{n+1}$, we get   $$
  r^{\;}_{G^{(0,n)}}(D)= r^{\;}_{(G^{(0,n)})^{(0,1)}}(D)=
  r_{H_1\vee C_{n+1}^{(1)}}(D)=r^{\;}_{H_1\vee C_{2n+2}^{\;}}(D).
  $$ The last two equalities 
  prove (\ref{end}), hence the result is proved.
\end{proof}

 \begin{remark}
 \label{linrk}
 The definition of linear equivalence for divisors on a graph with loops can be taken to be the same  as in Subsection~\ref{graphprel}. Indeed,
let $D,D'\in \Div (G)$; then $D$ and $D'$ can be viewed as divisors on the graph  $G_0$   obtained from $G$ by removing 
all the loop-edges, or as  divisors on the graph  $\hG$.
 By Lemma~\ref{sepvertex} we have that
 $D$ and $D'$ are linearly equivalent on $G_0$ if and only  
if and only if they are linearly equivalent  on $\hG$.

\noindent It is thus obvious that if $D\sim D'$ for divisors in $\Div(G)$, then $\rs_G(D)=\rs_G(D')$.

\end{remark}
The canonical divisor $\Ks\in \Div(\G)$ of $\G$ is defined as follows
\begin{equation}
\label{defK}
\Ks:=\sum_{v\in V(\G)} (\val (v) -2)v.
\end{equation}
\begin{thm}
\label{RRloop} Let $\G$ be a graph with $c$ loops, and 
let $D\in \Div(\G)$.
\begin{enumerate}
 
\item
\label{RR}
\emph{(}Riemann-Roch theorem\emph{)} 
$$\rs_{\G}(D)-\rs_{\G}(\Ks-D)=\deg D -g +1.
$$
In particular, we have 
$\rs_{\G}(\Ks)=g-1$ and 
$\deg \Ks=2g-2$.
\item
\label{Riemann}
\emph{(}Riemann theorem\emph{)}  If $\deg D\geq 2g-1$ then $$\rs_{\G}(D)=\deg D-g.$$
\end{enumerate}
\end{thm}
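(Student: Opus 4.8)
The plan is to reduce the whole statement to the Riemann-Roch theorem of \cite{BN} for the loopless graph $\hG$. By Definition~\ref{loopless} we have $\rs_{\G}(D)=r_{\hG}(\sigma^*D)$, the graph $\hG$ is loopless, and $g(\hG)=g$. The map $\sigma^*$ is a degree-preserving homomorphism (it merely places zeros at the new vertices $u_1,\dots,u_c$), so $\deg\sigma^*D=\deg D$ and $\sigma^*(\Ks-D)=\sigma^*\Ks-\sigma^*D$. Hence everything will follow once we compare $\sigma^*\Ks$ with the ordinary canonical divisor $K_{\hG}:=\sum_{w\in V(\hG)}(\val_{\hG}(w)-2)w$ of $\hG$.

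The key point, and really the only computation in the proof, is the identity
\[
K_{\hG}=\sigma^*\Ks .
\]
To see this, note first that each new vertex $u_j$ sits in the interior of a subdivided loop, so it is joined to its base vertex by exactly two edges; thus $\val_{\hG}(u_j)=2$ and the coefficient of $u_j$ in $K_{\hG}$ is $0$, matching $\sigma^*\Ks(u_j)=0$. For an original vertex $v\in V(G)$, subdividing the loops at $v$ does not change its valency (each loop still contributes two edge-ends at $v$, now directed toward the corresponding $u_j$), so $\val_{\hG}(v)=\val(v)$ and the coefficient of $v$ in $K_{\hG}$ equals $\val(v)-2=\Ks(v)$. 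This proves the identity.

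Granting this, part (\ref{RR}) is immediate. Applying the Riemann-Roch theorem of \cite{BN} to $\hG$ and the divisor $\sigma^*D$ gives
\[
r_{\hG}(\sigma^*D)-r_{\hG}(K_{\hG}-\sigma^*D)=\deg\sigma^*D-g+1 .
\]
Using $K_{\hG}=\sigma^*\Ks$ the second term becomes $r_{\hG}(\sigma^*(\Ks-D))=\rs_{\G}(\Ks-D)$, while the first is $\rs_{\G}(D)$ and $\deg\sigma^*D=\deg D$, which is exactly the asserted formula. The two ``in particular'' claims then follow formally: $\deg\Ks=\sum_{v}(\val(v)-2)=2|E(G)|-2|V(G)|=2g-2$, and substituting $D=\Ks$ into the formula together with $\rs_{\G}(0)=r_{\hG}(0)=0$ (valid since $0\in\Prin(\hG)$) yields $\rs_{\G}(\Ks)=g-1$. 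For part (\ref{Riemann}), if $\deg D\geq 2g-1$ then $\deg(\Ks-D)=(2g-2)-\deg D\leq-1<0$, so $\deg\sigma^*(\Ks-D)<0$ and hence $\rs_{\G}(\Ks-D)=r_{\hG}(\sigma^*(\Ks-D))=-1$, a divisor of negative degree having empty linear system; plugging this into part (\ref{RR}) gives $\rs_{\G}(D)=\deg D-g$.

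I do not expect a genuine obstacle here: the entire content is the canonical-divisor identity $K_{\hG}=\sigma^*\Ks$, which says that $\Ks$ was defined precisely so that its $\sigma^*$-pullback is the honest canonical divisor of the loopless model $\hG$. The only points demanding care are that $\hG$ is genuinely loopless (so that \cite{BN} applies even when a vertex carries several loops), that $\sigma^*$ is compatible with degree and with linear equivalence (Remark~\ref{linrk}), and that the rank $\rs_{\G}$ is computed on $\hG$ as in Definition~\ref{loopless}.
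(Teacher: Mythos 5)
Your proof is correct and follows essentially the same route as the paper: both reduce to the Baker--Norine theorem on the loopless refinement $\hG$ via the identity $\sigma^*\Ks=K_{\hG}$, checked exactly as you do (new vertices are $2$-valent, old vertices keep their valency), and both deduce part (\ref{Riemann}) from the negativity of $\deg(\Ks-D)$. The only cosmetic difference is that you spell out the two ``in particular'' claims, which the paper leaves implicit.
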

\begin{proof}
Let $U=\{u_1,\ldots, u_c\}\subset V(\hG)$ be the set of vertices added to $G$ to define $\hG$.
The canonical divisor $K_{\hG}$ of $\hG$ is 
$$
K_{\hG}=\sum_{\hv\in V(\hG)} (\val (\hv) -2)\hv   =\sum_{\hv\in V(\hG)\smallsetminus U} (\val (\hv) -2)\hv
$$
because the vertices in $U$ are all 2-valent. On the other hand we have an identification
$V(G)=V(\hG)\smallsetminus U$ and it is clear that this identification preserves the valencies. Therefore, by definition (\ref{defK}) we have
$$
 \sigma^* \Ks= K_{\hG}.
$$
Hence, since the map (\ref{sigman}) is a degree preserving homomorphism,
$$
\rs_{\G}(D)-\rs_{\G}(\Ks-D)=r_{\hG}(\sigma^*D)-r_{\hG}(K_{\hG} -\sigma^*D))=\deg D-g+1
$$
where, in the last equalty, we applied the  the Riemann-Roch formula for loopless graphs
(proved by Baker-Norine in \cite{BN}), 
together with the fact that    $\G$ and $\hG$ have the same genus.

Part (\ref{Riemann}) follows from the Riemann-Roch formula we just proved,  noticing that, if $\deg D\geq 2g-1$, then $\deg \Ks-D<0$ and hence $\rs_{\G}(\Ks-D)=-1$.
\end{proof}

 The next Lemma, which we will use later,  computes the rank of a divisor on
 the so called ``rose  with $g$ petals", or ``bouquet of $g$ loops" $R_g$.
\begin{lemma}
\label{rose}
Set $g\geq 1$ and $d\leq 2g$.
Let $R_g$ be the connected graph of genus $g$ having only one vertex
(and hence $g$ loop-edges).
For the unique divisor  $D\in \Div ^d(R_g)$ we have
$$
\rs_{R_g}(D) =\left\lfloor{\frac{d}{2}}\right\rfloor.
$$
\end{lemma}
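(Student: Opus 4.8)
The plan is to compute the rank of the degree-$d$ divisor on the bouquet $R_g$ by passing to the loopless graph $\widehat{R_g}$, where the rank is defined, and then reducing the problem to a genus-$1$ calculation via the cut-vertex structure. By Definition~\ref{loopless}, $\rs_{R_g}(D) = r_{\widehat{R_g}}(\sigma^*D)$, where $\widehat{R_g}$ is obtained by inserting one vertex in the interior of each of the $g$ loop-edges. The graph $\widehat{R_g}$ has a single original vertex $v$ together with $g$ new $2$-valent vertices $u_1,\dots,u_g$, and it is precisely a wedge of $g$ copies of the $2$-cycle $C_2$, all glued at $v$. Since $R_g$ has a unique vertex, the divisor $\sigma^*D$ is concentrated entirely at $v$, namely $\sigma^*D = d\,v$.

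\medskip

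\emph{First} I would exploit the cut-vertex decomposition repeatedly. Writing $\widehat{R_g} = C_2 \vee C_2 \vee \cdots \vee C_2$ ($g$ factors glued at $v$), the divisor $d\,v$ sits at the common vertex. Here I expect the natural lower bound $\rs_{R_g}(D) = r_{\widehat{R_g}}(d\,v) \geq \lfloor d/2 \rfloor$ to follow from an explicit construction, and the matching upper bound to require more care. For the lower bound, one can use Lemma~\ref{sepvertex}~(\ref{sepvertex3}): if I can show $r_{C_2}(\lfloor d/2\rfloor \cdot v) \geq \lfloor d/2\rfloor$ on each genus-$1$ piece $C_2$ (a cycle graph, for which Riemann-Roch and the structure of linear systems are fully understood via Theorem~\ref{RRloop} applied to $C_2$ of genus $1$), then distributing the degree $d$ as evenly as possible across the wedge summands and combining ranks yields a divisor of rank at least $\lfloor d/2\rfloor$ on $\widehat{R_g}$.

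\medskip

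\emph{For the upper bound}, which I expect to be the main obstacle, the cleanest route is to invoke the Riemann-Roch theorem for $\widehat{R_g}$ itself (Theorem~\ref{RRloop}~(\ref{RR})) together with Clifford-type reasoning. Since $R_g$ has genus $g$ and $\deg D = d \leq 2g$, Riemann-Roch gives $\rs_{R_g}(D) - \rs_{R_g}(K^{\#}_G - D) = d - g + 1$, and $\deg(K^{\#}_G - D) = 2g - 2 - d$. When $d \leq g-1$ this is a non-special range handled directly; when $g-1 < d \leq 2g$ I would bound $\rs_{R_g}(D)$ above by analyzing effective divisors on $\widehat{R_g}$ equivalent to $d\,v$ and showing no system of dimension exceeding $\lfloor d/2\rfloor$ can exist. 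The key combinatorial fact is that on each petal-cycle $C_2$, moving chips across the $2$-valent vertex $u_i$ via the generators $T_{u_i}$ costs a factor of $2$, so effective representatives of $d\,v$ can free up at most $\lfloor d/2\rfloor$ chips for the rank computation.

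\medskip

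The delicate point is making the upper-bound estimate rigorous across all $g$ petals simultaneously rather than one at a time, since the cut-vertex inequalities in Lemma~\ref{sepvertex} only give one-sided bounds. I anticipate the argument will either reduce, by symmetry and the structure of $\Prin(\widehat{R_g})$, to an explicit chip-firing computation showing that the maximal $k$ with $|d\,v - E| \neq \emptyset$ for all effective $E$ of degree $k$ is exactly $\lfloor d/2\rfloor$, or else proceed by induction on $g$ using the genus-$1$ base case and Riemann-Roch to control the interaction between petals. The genus-$1$ formula, $\rs_{C_2}(d\,v) = \lfloor d/2\rfloor$ for $d \leq 2$, anchors the induction and matches the claimed closed form.
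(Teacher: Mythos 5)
Your setup is the same as the paper's (pass to the $1$-subdivision $\widehat{R_g}$, which is a wedge of $g$ copies of $C_2$ at $v$, and compute $r_{\widehat{R_g}}(d\,v)$), and your upper bound via Clifford/Riemann--Roch is exactly what the paper does (it simply invokes Clifford's theorem from \cite{BN}, using $d\leq 2g$). But your lower bound has a genuine gap, on two counts. First, the anchor claim $r_{C_2}(\lfloor d/2\rfloor\cdot v)\geq \lfloor d/2\rfloor$ is false: $C_2$ has genus $1$, so by Riemann--Roch a divisor of degree $k\geq 1$ on it has rank $k-1$, not $k$ (indeed it contradicts the correct genus-$1$ formula $r_{C_2}(d\,v)=\lfloor d/2\rfloor$ that you state at the end, which for $d=2$ gives $r_{C_2}(v)=0$). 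Second, and more structurally, Lemma~\ref{sepvertex}~(\ref{sepvertex3}) gives $r_{G}(D_1+\dots+D_g)\geq \min_i r_{H_i}(D_i)$, a \emph{minimum}, not a sum; so ``distributing the degree evenly and combining ranks'' across the petals yields a bound of roughly $\lfloor d/g\rfloor$, not $\lfloor d/2\rfloor$. No lemma in the paper adds ranks across wedge summands.

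The paper's lower bound is instead a direct chip-firing computation on $\widehat{R_g}$ that you cannot avoid (it is essentially the ``factor of $2$'' observation you relegate to the upper bound): given any effective $E=e_0\widehat{v}+\sum_{i\in I}e_iu_i$ of degree $r=\lfloor d/2\rfloor$, one checks that $d\widehat{v}-E-\sum_{i\in I}\left\lceil e_i/2\right\rceil T_{u_i}$ is effective, using $T_{u_i}(\widehat{v})=2$, $T_{u_i}(u_i)=-2$ and the count $e_0+\sum_{i\in I}(e_i+1)\leq r+|I|\leq 2r\leq d$. A repair of your scheme is possible --- use superadditivity of rank on a fixed graph, $r(D+D')\geq r(D)+r(D')$ when both are nonnegative, together with the single verification $r_{\widehat{R_g}}(2v)\geq 1$ (which again is a chip-firing check on all of $\widehat{R_g}$, not on one petal) --- but that superadditivity is not Lemma~\ref{sepvertex} and is not established in the paper, so as written your argument does not go through.
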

\begin{proof}
Let $v$ be the unique vertex of $\G=R_g$, hence  $D=dv$.
To compute 
$\rs_{R_g}(D)$ we must use the refinement $\hG$ of $R_g$ 
defined above. 
In this case $\hG$ is the 1-subdivision of $R_g$.
So $V(\hG)=\{\hv, u_1,\ldots, u_g\}$ with each $u_i$ of valency $2$,   and 
  $\hv$  of valency $2g$.
We have $u_i\cdot v=2$ for all $i=1,\ldots, g$, and $u_i\cdot u_j=0$ for all $i\neq j$.

Let $\hD=d\hv$ be the pull-back of $D$ to $\hG$.
Set $r:=\left\lfloor{\frac{d}{2}}\right\rfloor$.
We will first   prove that $r_{\hG}(\hD)\geq  r$.
Let 
$E$ be a degree $r$ effective divisor on $\hG$; then for some $I\subset \{1,\ldots,g\}$ we have
$$E=e_0\hv+\sum_{i\in I} e_i u_i
$$ with $e_i> 0$ and $\sum_{i=0}^r e_i=r$. Notice that $|I|\leq r$.
Now,
$$
\hD-E\sim d\hv-e_0\hv-\sum_{i\in I} e_i u_i-\sum_{i\in I}\left\lceil{\frac{e_i}{2}}\right\rceil T_{u_i}=:F.
$$
Let us prove that $F\geq 0$. Recall that $T_{u_i}(\hv)=2$, hence
$$
F(\hv)=d-e_0-2\sum_{i\in I} \left\lceil{\frac{e_i}{2}}\right\rceil  \geq d-e_0-\sum_{i\in I}(e_i+1)\geq 
2r-r
 - |I| = r-|I|\geq 0
$$
as, of course, $|I|\leq r$.
Next,   since  $T_{u_i}(u_i)=-2$ and $T_{u_i}(u_j)=0$ if $i\neq j$,  we have for all $i\in I$,
$$
F(u_i)=-e_i+2\left\lceil{\frac{e_i}{2}}\right\rceil\geq 0,
$$
and $F(u_j)=0$ for all $u_j\not\in I$.
Therefore $r_{\hG}(\hD)\geq r$. 

Finally, since  $d\leq 2g$, we can apply  Clifford's theorem
\cite[Cor. 3.5]{BN}, and therefore equality must hold.
\end{proof}

\subsection{Divisors on weighted graphs.}
\label{wgsec}
Let $(\G, \omega )$ be a {\it weighted graph}, by which we mean that
$G$ is an ordinary graph  and 
$\omega :V(G)\to \Z_{\geq 0}$ a {\it weight} function
on the vertices.
The genus, $g(G, \omega )$, of $(G,\omega)$    is  
\begin{equation}
\label{gw}
g(G, \omega )=b_1(G)+\sum_{v\in V(G)}\omega (v).
\end{equation}

We associate to $(\G, \omega )$ a weightless graph $\G^{\omega} $ as follows:
$\G^{\omega} $ is obtained by attaching at every vertex $v$ of $G$,\ 
$\omega (v)$ loops
(or ``1-cycles"), denoted by $C_v^1,\ldots ,C^{\omega (v)}_v.$
 
 We call $\G^{\omega} $ the {\it virtual} (weightless) graph of $(\G,\omega )$, and we  say that
  the $C^i_v$ are the  virtual loops. 
The initial graph $\G$ is a subgraph of $\G^{\omega} $ and 
we have an identification
\begin{equation}
\label{vertid}
V(\G)=V(\G^\omega ).
\end{equation}
It is easy to check that
\begin{equation}
\label{wg}
g (\G, \omega )=g(\G^\omega ).
\end{equation}
For the group of divisors of the weighted graph $(\G, \omega )$, we have
\begin{equation}
\label{wdiv}
\Div (\G, \omega )=\Div (G^\omega )=\Div(G). 
\end{equation}
The canonical divisor of $(\G, \omega )$ is defined as  the canonical divisor of $\G^{\omega} $, introduced in the previous section, namely,
\begin{equation}
\label{wcan}
K_{(\G, \omega )}:= K^{\#}_{\G^{\omega} }=\sum_{v\in V(G^\omega )} (\val_{G^{\omega} }(v) -2)v.
\end{equation}
Note that $K_{(\G, \omega )}\in \Div(\G,\omega )$.
By (\ref{wg}) and Theorem~\ref{RRloop} we have
$$
\deg K_{(\G, \omega )} =2g (\G, \omega )-2.
$$
For any $D\in \Div(\G, \omega )$ we define (cf. Definition~\ref{loopless})
\begin{equation}
\label{defrw} 
r_{(\G, \omega )}(D):=\rs_{\G^{\omega} }(D)=r_{\hGw}(D). 
\end{equation}

\begin{thm}
\label{wRR}
Let $(G,\omega)$ be a weighted graph.
\begin{enumerate}
\item
For every $D\in \Div(\G, \omega)$ we have
$$
r_{(\G, \omega )}(D)-r_{(\G, \omega )}(K_{(\G, \omega )}-D)=\deg D-g+1.
$$
\item
\label{wRR2}
For every $D,D'\in \Div(G)$ such that $D\sim D'$,
we have $r_{(\G, \omega )}(D)=r_{(\G, \omega )}(D')$.
\end{enumerate}
\end{thm}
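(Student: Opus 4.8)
The plan is to reduce the entire statement to the Riemann--Roch theorem for graphs with loops, Theorem~\ref{RRloop}, and to Remark~\ref{linrk}, both applied to the virtual weightless graph $\G^{\omega}$. The point is that, by the way the weighted invariants were set up in Subsection~\ref{wgsec}, each object appearing in the theorem is \emph{by definition} the corresponding object attached to $\G^{\omega}$; thus no genuinely new argument is needed beyond bookkeeping these identifications. Throughout, write $g=g(\G,\omega)$, which by \eqref{wg} equals $g(\G^{\omega})$.

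For the first part I would simply unwind the definitions. By \eqref{defrw} the weighted rank satisfies $r_{(\G,\omega)}(D)=\rs_{\G^{\omega}}(D)$ for every divisor; by \eqref{wcan} the weighted canonical divisor is $K_{(\G,\omega)}=K^{\#}_{\G^{\omega}}$, so that $r_{(\G,\omega)}(K_{(\G,\omega)}-D)=\rs_{\G^{\omega}}(K^{\#}_{\G^{\omega}}-D)$; and $g=g(\G^{\omega})$ as recalled above. Substituting these three identities into the desired formula turns it into
$$
\rs_{\G^{\omega}}(D)-\rs_{\G^{\omega}}(K^{\#}_{\G^{\omega}}-D)=\deg D-g(\G^{\omega})+1,
$$
which is precisely Theorem~\ref{RRloop}\eqref{RR} applied to the graph with loops $\G^{\omega}$ (note that $\G^{\omega}$ has exactly $\sum_{v}\omega(v)$ loop-edges). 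Hence part~(1) follows at once.

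For the second part I would first check that linear equivalence is the same whether computed on $\G$ or on $\G^{\omega}$. Indeed, passing from $\G$ to $\G^{\omega}$ only adjoins loops, and the intersection product $(v\cdot w)$ is insensitive to adding loops: adding a loop at $v$ raises $\val(v)$ by $2$ and $\lp(v)$ by $1$, leaving $(v\cdot v)=-\val(v)+2\lp(v)$ unchanged, while $(v\cdot w)$ for $v\neq w$ counts non-loop edges. Thus the generators $T_v$ agree on $\G$ and on $\G^{\omega}$, so $\Prin(\G)=\Prin(\G^{\omega})$ inside $\Div(\G)=\Div(\G^{\omega})$, and $D\sim D'$ on $\G$ is equivalent to $D\sim D'$ on $\G^{\omega}$. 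Now Remark~\ref{linrk}, applied to the graph with loops $\G^{\omega}$, gives $\rs_{\G^{\omega}}(D)=\rs_{\G^{\omega}}(D')$, that is $r_{(\G,\omega)}(D)=r_{(\G,\omega)}(D')$, which is part~(2).

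The point to stress is that there is no real obstacle in this proof: all the genuine content has already been carried out in Theorem~\ref{RRloop}, whose own proof reduces the loopy graph $\G^{\omega}$ to its loopless refinement $\hGw$ and invokes the Baker--Norine formula. The only care required is to verify that the three definitions \eqref{defrw}, \eqref{wcan}, \eqref{wg} line up exactly with the inputs of Theorem~\ref{RRloop}, together with the short observation that principal divisors, and hence linear equivalence, are unaffected by the virtual loops.
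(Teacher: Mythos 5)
Your proposal is correct and follows essentially the same route as the paper: part (1) is the same direct unwinding of the definitions \eqref{defrw}, \eqref{wcan}, \eqref{wg} into Theorem~\ref{RRloop}, and part (2) reduces, as in the paper, to the invariance of $\rs_{\G^{\omega}}$ under linear equivalence via Remark~\ref{linrk} (whose content is the iterated application of Lemma~\ref{sepvertex} to the $2$-cycles of $\hGw$). Your explicit check that $\Prin(\G)=\Prin(\G^{\omega})$ because the intersection product is insensitive to loops is just a slightly more direct packaging of the same step the paper performs by passing through the loopless subgraph $G_0$.
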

\begin{proof}
The first part 
 is an immediate consequence of Theorem~\ref{RRloop}.
  
  For \eqref{wRR2}, recall Remark~\ref{linrk}; we have that $D\sim D'$ on $G$ if and only if $D$ and $D'$ are equivalent on the graph $G_0$ obtained by removing all loop-edges from $G$.
  Now, $G_0$ is a subgraph of $\hGw$, moreover $\hGw$ is obtained from $G_0$
  by attaching  a finite set of 2-cycles at some vertices of $G_0$.
 Therefore, by iterated applications of Lemma~\ref{sepvertex}, we have that $D$ is linearly equivalent to $D'$ on $\hGw$. Hence the statement follows from the fact that $r_{\hGw}$ is constant on linear equivalence classes of $\hGw$.
\end{proof}
 
\section{Specialization Lemma for weighted graphs}
\label{secspe}
In this section we fix an algebraically closed field   and assume that all schemes are of finite type over it. By ``point" we mean closed point.

By {\it nodal curve} we mean a connected, reduced, projective, one-dimensional scheme, having at most nodes (ordinary double points) as singularities.
All curves we shall consider in this section are nodal.

Let $X$ be a nodal curve; its {\it dual graph}, denoted by $G_X$, is such that $V(G_X)$ is identified with the set of irreducible components of $X$,  $E(G_X)$ is identified with the set of nodes of $X$, and there is an edge joining two vertices for every node lying at the intersection of the two corresponding components. In particular, the loop-edges of $G_X$ correspond to the nodes of the irreducible components of $X$.

The {\it weighted dual graph} of $X$, denoted by $(G_X,\omega_X)$, 
has $G_X$ as defined above, and the weight function $\omega_X$ is such that
$\omega_X(v)$ is the geometric genus of the component of $X$ corresponding to $v$.
In particular, let $g_X$ be the (arithmetic) genus of $X$, then
$$
g_X=b_1(G_X)+\sum_{v\in V(G_X)}\omega_X(v).
$$

\subsection{Specialization of families  of line bundles on curves}
 Let $\phi:\X\to B$ be a family of curves, and denote by
 $\pi: \Picphi\to B$ 
its  Picard scheme (often denoted by  $\Pic_{\X/B}$).
The set of  sections of $\pi$  is denoted as follows
  $$
  \Picphi (B):=\{\L:B\to \Picphi:\quad  \pi\circ\L=id_B \}.
  $$
  (The notation $\L$ indicates that $\L(b)$ is a line bundle on $X_b=\phi^{-1}(b)$ for every $b\in B$.) 
Let $b_0\in B$ be a closed point and  set $X_0=\phi^{-1}(b_0)$;
 denote by $(\G, \omega )$ the weighted dual graph of $X_0$.
We 
identify $\Div(\G)=\Z^{V(\G)}$, so that we have a map
\begin{equation}
\label{mdeg}
\Pic (X_0) \la \Div(\G)=\Z^{V(\G)};\quad \quad L\mapsto \mdeg \ L 
\end{equation}
where $\mdeg$ denotes the multidegree, i.e.,  for $v\in V(G)$  
the $v$-coordinate of $ \mdeg \ L$ is the degree of $L$ restricted to $v$  (recall that $V(\G)$ is identified with the set of irreducible components of $X_0$).
Finally, we have a {\it specialization} map $\tau$
\begin{equation}
\label{tau}
\Picphi(B) \stackrel{\tau}{\la}  \Div(\G) ;\quad \quad L \mapsto  \mdeg \ \L(b_0).
 \end{equation}

\begin{defi}
Let $X_0$ be a nodal  curve.
A  projective morphism $\phi:\X\to B$ of schemes
 is a {\it regular one-parameter smoothing of} $X_0$
if:
\begin{enumerate}
\item
 $B$ is   smooth,  quasi-projective, $\dim B=1;$
 \item $\X$ is a regular surface;
 \item there  is a closed point
 $b_0\in B$ such that
$X_0\cong\phi^{-1}(b_0)$. (We shall usually identify $X_0=\phi^{-1}(b_0)$.)
 \end{enumerate}\end{defi}
 
 \begin{remark}
 \label{connect2}
As we mentioned in Remark~\ref{connect},
there is a  connection between the divisor theory of $X_0$ and that of its dual graph $\G$.
 We already observed in \eqref{mdeg} that to every divisor,  or   line bundle,  on $X_0$ there is an associated
 divisor on $\G$.
 Now we need to identify $\Prin (\G)$.
 As we already said, the elements of $\Prin (\G)$
 are the multidegrees of certain divisors on $X_0$, called twisters. More precisely,
 fix $\phi:\X\to B$ a regular one-parameter smoothing of $X_0$;
 we have the following subgroup of $\Pic X_0$:
 $$
\Tw_{\phi} (X_0):=\{L\in \Pic X_0: \  
L\cong \O_{\X}(D)_{|X_0} \text{ for some }
  D\in \Div \X: \  \supp D\subset X_0 \}.
$$
The set of twisters, $\Tw(X_0)$, is defined as the union of the $\Tw_{\phi} (X_0)$
for all one-parameter smoothings $\phi$ of $X_0$.

The group $\Tw_{\phi} (X_0)$ depends on $\phi$, but its image under the multidegree map \eqref{mdeg} does not, so that $\mdeg(\Tw_{\phi} (X_0))=\mdeg(\Tw (X_0))$.
Moreover, the multidegree map
induces an identification between
the multidegrees of all twisters and $\Prin (\G)$:
$$
\mdeg (\Tw (X_0))=\Prin (\G)\subset \Z^{V(\G)}.
$$
See \cite{cner}, \cite [Lemma 2.1]{bakersp} or \cite{CBNgraph} for details.
\end{remark}

\begin{defi}
\label{phieqdef}
 Let $\phi$ be a regular one-parameter smoothing of  $X_0$ and let 
 $\L,\L'\in \Picphi(B)$.
 We define $\L$ and $\L'$ to be  $\phi$-equivalent,   writing $ \L \sim _{\phi}\L'$, as follows
\begin{equation}
\label{phieq}
 \L \sim _{\phi}\L'\ \  \   \text{ if } \  \  \L(b)\cong \L'(b),\  \  \forall b\neq b_0.
\end{equation}
 \end{defi}

 \begin{example}
 \label{L(C)} 
Let $\phi$ be as in the definition and let $C\subset X_0$ be an irreducible component. Denote by $\L'=\L(C)\in \Picphi(B)$
the section of $\Picphi \to B$ defined as follows:
$\L'(b)=\L(b)$  if $b\neq b_0$ and $\L'(b_0)=\L\otimes \O_{\X}(C)\otimes \O_{X_0}$.
Then $\L(C)\sim _{\phi}\L$.
The same holds replacing $C$ with any  $\Z$-linear combination   of the components of $X_0$.
\end{example}

 \begin{lemma}
 \label{philm} Let $\phi$ be a regular one-parameter smoothing of  $X_0$ and let 
 $\L,\L'\in \Picphi(B)$ such that $\L\sim _{\phi}\L'$.
 Then the following hold.
\begin{enumerate}
\item
 \label{philm1}
$\tau(\L)\sim \tau (\L')$.
\item
 \label{philm2}
If  $h^0(X_b,\L(b))\geq r+1$ for every $b\in B\smallsetminus {b_0}$,
then   $h^0(X_b,\L'(b))\geq r+1$
for every $b\in B$.
\end{enumerate}
\end{lemma}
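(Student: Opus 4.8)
The plan is to treat the two parts separately: the first rests on the identification of twisters with $\Prin(\G)$ recorded in Remark~\ref{connect2}, and the second on the upper semicontinuity of $h^0$ together with the irreducibility of $B$.

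For part (\ref{philm1}), I would first represent the sections $\L,\L'$ by honest line bundles $\mathcal M,\mathcal M'$ on $\X$; this is harmless for the computation of $\tau$, since a section of $\Picphi$ differs from a line bundle on $\X$ only by a pullback from $B$, and such a pullback restricts trivially to every fiber and so contributes nothing to the multidegree on $X_0$. Set $\mathcal N:=\mathcal M\otimes(\mathcal M')^{-1}$. Because $\L\sim_{\phi}\L'$, the bundle $\mathcal N$ restricts to $\O_{X_b}$ on every fiber $X_b$ with $b\neq b_0$. By the seesaw principle $\mathcal N$ is therefore the pullback of a line bundle on $B\smallsetminus\{b_0\}$ over $\phi^{-1}(B\smallsetminus\{b_0\})$; extending this line bundle across $b_0$ and using that $\X$ is a regular surface (so that a bundle trivial off the divisor $X_0$ is a combination of the components of $X_0$), one gets $\mathcal N\cong \O_{\X}(D)\otimes\phi^*M$ for some divisor $D$ with $\supp D\subset X_0$ and some $M\in\Pic(B)$. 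Restricting to $X_0$ kills the pullback factor, so $\mathcal N|_{X_0}=\L(b_0)\otimes\L'(b_0)^{-1}$ is exactly a twister. By Remark~\ref{connect2} its multidegree lies in $\Prin(\G)$, and this multidegree is precisely $\tau(\L)-\tau(\L')$; hence $\tau(\L)\sim\tau(\L')$.

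For part (\ref{philm2}), the point is that the hypothesis transfers from $\L$ to $\L'$ away from $b_0$ for free: since $\L\sim_{\phi}\L'$ we have $\L'(b)\cong\L(b)$, and hence $h^0(X_b,\L'(b))=h^0(X_b,\L(b))\geq r+1$, for every $b\neq b_0$. It remains only to control the special fiber. Representing $\L'$ by a line bundle on $\X$, which is flat over $B$, the function $b\mapsto h^0(X_b,\L'(b))$ is upper semicontinuous on $B$ by the semicontinuity theorem, so the locus $\{b\in B:h^0(X_b,\L'(b))\geq r+1\}$ is closed. This locus contains $B\smallsetminus\{b_0\}$, which is dense because $B$ is an irreducible curve; being closed, it must then equal all of $B$. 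In particular $h^0(X_{b_0},\L'(b_0))\geq r+1$, which is the one remaining inequality.

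I expect the only genuine difficulty to lie in the twister identification of part (\ref{philm1}): one must be careful that sections of the relative Picard scheme need not come from line bundles on $\X$, and that the reduction to a divisor supported on $X_0$ uses both the regularity of the surface $\X$ and the fact that $B$ is a smooth curve (so that the auxiliary line bundle on $B\smallsetminus\{b_0\}$ extends across $b_0$). Once the difference is known to be a twister, the appeal to Remark~\ref{connect2} is immediate, and the semicontinuity argument of part (\ref{philm2}) is routine.
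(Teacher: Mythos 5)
Your proposal is correct and follows essentially the same route as the paper: reduce to honest line bundles on $\X$, identify $\L\otimes(\L')^{-1}$ with $\O_{\X}(D)$ for $D$ supported on $X_0$ so that Remark~\ref{connect2} gives $\tau(\L)-\tau(\L')\in\Prin(\G)$, and deduce part (\ref{philm2}) from upper semicontinuity of $h^0$ plus density of $B\smallsetminus\{b_0\}$. The one point you flag but do not actually resolve --- that a section of $\Picphi$ need not be induced by a line bundle on $\X$ --- is handled in the paper by first passing to a finite \'etale base change (citing \cite[Claim 4.6]{CBNgraph}), after which representing line bundles exist; your remark that two such representatives differ by a pullback from $B$ addresses uniqueness rather than existence, so you should add that base-change step to make the reduction legitimate.
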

\begin{proof}
To prove both parts we can replace $\phi$ by a finite \'etale base change
(see \cite[Claim 4.6]{CBNgraph}). Hence we can assume that $\L$ and $\L'$ are given by line bundles
on $\X$, denoted again by $\L$ and $\L'$.

\eqref{philm1}.
Since $\L$ and $\L'$ coincide on every fiber but the special one,
there exists a divisor $D\in \Div \X$ such that $ \supp D\subset X_0$ for which
$$
\L\cong\L'\otimes \O_{\X}(D).
$$
Using Remark~\ref{connect2} we have
$\O_{\X}(D)_{|X_0}\in \Tw (X_0)$ and 
$$
\tau  (\O_{\X}(D))=\mdeg \ \O_{\X}(D)_{|X_0}\in \Prin (\G)
$$
so we are done.

\eqref{philm2}. This is a straightforward consequence of the upper-semicontinuity of $h^0$.
\end{proof}
By the Lemma, we have a commutative diagram:
 \begin{equation}\label{diag1}
\xymatrix{
\Picphi(B)\ar@{^{}->}[r]^{\tau}  \ar@{->>}[d]_{} & \Div(G)\ar@{->>}[d] \\
\Picphi(B)/_{\sim_{\phi}}\ar@{->}[r] &\Jac(G)
}
\end{equation}
and, by Remark~\ref{connect2}, the image of $\tau$ contains $\Prin(G)$.

\subsection{Weighted Specialization Lemma} 
We shall now prove Theorem~\ref{spe},
  generalizing   the original specialization Lemma \cite[Lemma 2.8]{bakersp} to
  weighted graphs. Our set-up is similar to that of \cite[Prop.4.4]{CBNgraph}, which is   Theorem~\ref{spe}
 for the (easy) special  case of   
 weightless graphs admitting loops.     
  Before proving  Theorem~\ref{spe} we need some preliminaries.
  
  \medskip
  
  Let $\G$ be a connected graph.  
For $v,u\in V(\G)$, denote by
$ d(v,u) $ the distance between $u$ and $v$ in $G$; note that $ d(v,u) $ is the minimum  length of a path  joining $v$ with $u$,
so that $d(v,u)\in \Z_{\geq 0}$ and  $d(v,u)=0$ if and only if $v=u$.

\medskip

Fix $v_0\in V(\G)$;
we now define an ordered partition of $V(\G)$ (associated to $v_0$) by looking at the distances to $v_0$.
For $i\in \Z_{\geq 0}$ set
$$
Z_i^{(v_0)}:=\{u\in V(\G): d(v_0,u)=i\};
$$
we have $Z_0^{(v_0)}=\{v_0\}$ and, obviously, there exists an $m$ such that 
 $Z_n^{(v_0)}\neq \emptyset $ if and only if $0\leq n\leq m$.
We have thus an ordered partition  of $V(\G)$
\begin{equation}
\label{partition}
V(\G)=Z_0^{(v_0)}\sqcup\ldots \sqcup Z_m^{(v_0)}.
\end{equation}
 We refer to it as {\it the distance-based partition starting  at $v_0$}.
 We will  often   omit the superscript $(v_0)$.

\begin{remark}
One checks easily that 
for every $u\in V(\G)\smallsetminus \{v_0\}$ with $u\in Z_i$  and for any $0\leq i\neq j\leq m$,  we have
\begin{equation}
\label{partitionw}
u \cdot Z_j\neq 0 \  \  \text{ if and only if } \  \  j=i\pm 1.
\end{equation}
Therefore for any $0\leq i\neq j\leq m$, we have
$
Z_i\cdot Z_j\neq 0$  if and only if $|i-j|=1.$

\end{remark}
Whenever $\G$ is the dual graph of a curve $X_0$, we identify $V(\G)$ with the components of $X_0$ without further mention and with no change in notation. Similarly, a subset of vertices $Z\subset V(\G)$
 determines a subcurve of $X_0$ (the subcurve whose components
 are the vertices in $Z$)
  which we denote again by $Z$.

 \medskip

The following result will be used to prove Theorem~\ref{spe}.

\begin{prop}
\label{component-general}
Let $X_0$ be a nodal curve,  $C_0, C_1,\dots, C_n \subset X_0$  its irreducible components 
  of arithmetic genera $g_0, g_1, \dots, g_n$, respectively, 
and $\G$ the dual graph of $X_0$.
Fix  $\phi:\X \to B$ a regular one-parameter 
smoothing of    $X_0$,     and
 $\L\in \Picphi(B)$ such that
$ 
h^0(X_b, \L (b))\geq  r+1>0
$ for every $b\in B$. Consider a sequence $r_0, r_1,\dots, r_n$ of non-negative integers such that $r_0 + r_1 +\dots +r_n =r$. Then there exists 
an effective divisor $E\in \Div(\G)$ such that
$E\sim \tau(\L)$ 
and for any $0\leq i\leq n$
\begin{equation}\label{eq1}
E(C_i)\geq\left\{ \begin{array}{ll}
2r_i & \text { if } r_i\leq g_i-1\\
\\
 r_i+g_i & \text { if } r_i\geq g_i\\
 \end{array}\right.
\end{equation}
 (viewing $C_i$ as a vertex of $G$, as usual).
 \end{prop}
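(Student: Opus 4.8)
The plan is to transfer the entire statement to the special fiber $X_0$, to realize the operation ``$E\sim\tau(\L)$'' as twisting by components, and then to read the degree bounds off the cohomology of the restrictions $\L'(b_0)|_{C_i}$.

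\emph{Translation.} First I would use Example~\ref{L(C)} and Remark~\ref{connect2} to reformulate the conclusion. Twisting $\L$ by an integral component $C$ produces a $\phi$-equivalent $\L(C)$ and changes $\tau(\L)$ by the principal divisor $\mdeg\bigl(\O_{\X}(C)|_{X_0}\bigr)\in\Prin(\G)$; since $\Prin(\G)=\mdeg(\Tw(X_0))$, the effective divisors $E\sim\tau(\L)$ are exactly the multidegrees $\mdeg M$ with $M=\L'(b_0)$ for some $\L'\sim_{\phi}\L$ and $\mdeg M\ge 0$. By Lemma~\ref{philm}\eqref{philm2} any such $\L'$ still satisfies $h^0(X_b,\L'(b))\ge r+1$ for every $b\in B$, so in particular $h^0(X_0,M)\ge r+1$. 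Thus it suffices to produce a single twist $\L'$ for which $M=\L'(b_0)$ has $\mdeg M\ge 0$ and $\deg(M|_{C_i})$ at least the bound of \eqref{eq1} for every $i$.

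\emph{From degrees to cohomology via Clifford.} The bounds in \eqref{eq1} are precisely the minimal degrees forced on a line bundle $N$ on an integral (Gorenstein) curve of arithmetic genus $\gamma$ carrying $h^0(N)\ge s+1$ sections: if $N$ is nonspecial, Riemann--Roch gives $\deg N=h^0(N)+\gamma-1\ge s+\gamma$; if $N$ is special, Clifford's theorem for integral curves gives $\deg N\ge 2(h^0(N)-1)\ge 2s$. A one-line case analysis on $s\lessgtr\gamma$ then yields $\deg N\ge 2s$ when $s\le\gamma-1$ and $\deg N\ge s+\gamma$ when $s\ge\gamma$. Hence it is enough to arrange, by one twist, that $h^0(C_i,M|_{C_i})\ge r_i+1$ for every $i$, together with $\mdeg M\ge 0$.

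\emph{Simultaneous concentration, by induction on $n$.} The base case $n=0$ is immediate: $X_0=C_0$ is integral, $M=\L(b_0)$ already has $h^0\ge r+1$, and $\Prin(\G)=0$ forces $E=\tau(\L)=\mdeg M$, so the previous paragraph concludes. For the inductive step I would first concentrate sections on $C_0$: twisting $\L$ by the complementary curve $Y:=\overline{X_0\setminus C_0}$ raises the degree on $C_0$ and lowers it on $Y$, and for a suitable minimal amount of twisting the restriction sequence $0\to H^0\bigl(Y,M|_Y(-(C_0\cap Y))\bigr)\to H^0(X_0,M)\to H^0(C_0,M|_{C_0})$ forces $h^0(C_0,M|_{C_0})\ge r_0+1$ while leaving a space of dimension $\ge r-r_0$ of sections that vanish on $C_0$, i.e.\ are supported on $Y$. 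Passing to $Y$ (with a one-parameter smoothing carrying the surviving sections) I would apply the inductive hypothesis to $C_1,\dots,C_n$ with weights $r_1,\dots,r_n$, and finally recombine the twist on $C_0$ with the one obtained on $Y$ into a single $\L'$ on $\X$, using the decomposition machinery of Lemma~\ref{sepvertex} and Proposition~\ref{sepref} along $C_0\cap Y$ to keep $E=\mdeg M$ effective and equivalent to $\tau(\L)$ on all of $\G$.

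The hard part will be this last step: concentrating \emph{exactly} $r_0+1$ sections on $C_0$ (overshooting would starve $Y$ of the $r-r_0+1$ sections the recursion needs) while preserving effectivity on every component, together with the off-by-$|C_0\cap Y|$ bookkeeping at the interface when transferring sections from $X_0$ to $Y$ and back. This is exactly where the upper-semicontinuity of $h^0$ across the family must be combined with the reduced-divisor technique, and it is the ``non-trivial algebro-geometric'' step flagged in the introduction.
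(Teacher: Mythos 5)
Your first two steps are sound and match the paper's setup: realizing the effective divisors $E\sim\tau(\L)$ as multidegrees of twists $\L'\sim_\phi\L$ (Example~\ref{L(C)}, Remark~\ref{connect2}, Lemma~\ref{philm}), and converting the degree bounds \eqref{eq1} into the statement $h^0(C_i,M|_{C_i})\ge r_i+1$ via Clifford and Riemann--Roch on an integral curve (this is exactly Claim~\ref{claim:increasing} in the paper). The gap is in your third step, and it is not a bookkeeping issue but a structural one.

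First, your induction on $n$ requires ``passing to $Y=\overline{X_0\smallsetminus C_0}$ with a one-parameter smoothing carrying the surviving sections.'' No such object exists: the hypothesis of the proposition is about a family $\phi:\X\to B$ whose \emph{generic} fiber is irreducible and carries $r+1$ sections, and the subcurve $Y$ of the special fiber is not the special fiber of any regular one-parameter smoothing derived from $\phi$; the sections of $L'_0$ vanishing on $C_0$ live on $X_0$ alone and do not extend to a section-rich family over $B$ with special fiber $Y$. So the inductive hypothesis cannot be invoked for $Y$. Second, the ``recombination'' you propose via Lemma~\ref{sepvertex} and Proposition~\ref{sepref} only applies when $G$ decomposes along a \emph{cut vertex}; in general $C_0$ meets $Y$ in many nodes, $G$ is not a wedge, and twists of $Y$ as a standalone curve do not correspond to elements of $\Prin(\G)$. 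Third, the ``simultaneous concentration'' (getting $h^0(C_0,M|_{C_0})\ge r_0+1$ while keeping a space of dimension $\ge r-r_0$ of sections vanishing on $C_0$, and keeping $\mdeg M\ge 0$ everywhere) is precisely the hard content, and you assert it rather than prove it.

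The paper avoids all three problems by inducting on $r$ instead of $n$: it first applies the inductive hypothesis (with $r'_0=0$, $r'_i=r_i$) to get $E=\tau(\L)$ satisfying \eqref{eq1} for $i\ge1$, then imposes that $E$ maximize $(E(C_0),E(Z_1),\dots,E(Z_m))$ lexicographically with respect to the distance-based partition starting at $C_0$. The single twist $\L'=\L(-C_0)$ must then violate \eqref{eq1} somewhere (by maximality of $E(C_0)$), and an iterative vanishing-locus argument --- sections vanishing on $Y_1$, then on $Y_1\cup Y_2$, etc., with codimension at each stage bounded by $\sum r_i$ over the newly added components via Claim~\ref{claim:increasing} --- produces at least $r_0+1$ sections of $L'_0$ vanishing on all of $\overline{X_0\smallsetminus C_0}$; these inject into $H^0(C_0,L_0)$ and Clifford finishes. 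The termination of the iteration is exactly where the lexicographic maximality over the $Z_j$'s is used. You would need to replace your inductive step with a mechanism of this kind (or something equivalent) that never leaves the ambient family $\phi$.
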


In the proof we are going to repeatedly use the following easy observation.
  \begin{claim}\label{claim:increasing}
 Let $g$ be a nonnegative integer and  $s: \mathbb N \rightarrow \mathbb N$  the function defined by  
$$
s(t)=\left\{ \begin{array}{ll}
2t & \text { if } t\leq g-1\\
\\
t+g& \text { if } t\geq g.\\
 \end{array}\right.
$$
 \begin{enumerate}
 \item
 \label{claim:increasing1}
$s(t)$ is an increasing function. 
\item 
\label{claim:increasing2}
 Let $C$ be an irreducible nodal curve of genus $g$ and $M$ a line bundle of degree $s(t)$ on $C$.
 Then $h^0(C,M)\leq t+1$.
 \end{enumerate}
\end{claim}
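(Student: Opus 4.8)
The plan is to handle the two parts independently: part~(\ref{claim:increasing1}) is a direct monotonicity check, and part~(\ref{claim:increasing2}) splits according to the two regimes in the definition of $s$, one governed by Riemann--Roch and vanishing, the other by Clifford's inequality.

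For part~(\ref{claim:increasing1}) I would simply compare consecutive values. On the first branch ($t\leq g-1$) one has $s(t+1)-s(t)=2$, on the second branch ($t\geq g$) one has $s(t+1)-s(t)=1$, and at the junction $s(g)-s(g-1)=2g-(2g-2)=2>0$; when $g=0$ the first branch is empty and $s(t)=t$. In all cases $s$ is increasing.

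For part~(\ref{claim:increasing2}) set $d:=\deg M=s(t)$ and recall that $C$, being an irreducible nodal curve, is an integral Gorenstein projective curve of arithmetic genus $g$, whose dualizing sheaf $\omega_C$ is a line bundle of degree $2g-2$ with $h^0(C,\omega_C)=g$. I may assume $h^0(C,M)\geq 1$, otherwise the bound is trivial, and I will use Riemann--Roch $h^0(C,M)-h^1(C,M)=d-g+1$ together with Serre duality $h^1(C,M)=h^0(C,\omega_C\otimes M^{-1})$. If $t\geq g$, then $d=t+g$, so $\deg(\omega_C\otimes M^{-1})=g-2-t<0$; since a negative-degree line bundle on an integral curve has no nonzero global sections, $h^1(C,M)=0$ and Riemann--Roch gives $h^0(C,M)=d-g+1=t+1$. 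If $t\leq g-1$, then $d=2t\leq 2g-2$; when $M$ is non-special I get $h^0(C,M)=d-g+1=2t-g+1\leq t+1$, and when $M$ is special I apply Clifford's inequality $h^0(C,M)\leq d/2+1=t+1$.

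The main point to get right is Clifford's inequality in the special case, and the subtlety is that it must be applied directly to the singular integral curve $C$. One cannot reduce to the smooth case by pulling $M$ back along the normalization $\nu\colon\widetilde C\to C$: although $h^0(C,M)\leq h^0(\widetilde C,\nu^*M)$, the normalization has the smaller geometric genus, and the resulting estimate is genuinely too weak once $C$ has two or more nodes. Instead I would either cite Clifford's theorem for integral curves, or prove it using that $C$ is Gorenstein: the Hopf-type bound on the image of the multiplication map $H^0(C,M)\otimes H^0(C,\omega_C\otimes M^{-1})\to H^0(C,\omega_C)$ yields $h^0(C,M)+h^0(C,\omega_C\otimes M^{-1})\leq g+1$, which combined with Riemann--Roch gives $2\,h^0(C,M)\leq d+2$, as desired.
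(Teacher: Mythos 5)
Your proof is correct and follows the same route as the paper, which simply observes that part (1) is trivial and that part (2) is an immediate consequence of Clifford's inequality and Riemann's theorem, both valid on an irreducible nodal curve. Your write-up merely fills in the details the paper leaves to the reader (the case split at $t\geq g$ versus $t\leq g-1$, and the caveat that Clifford must be applied to the integral Gorenstein curve $C$ itself rather than to its normalization), and does so accurately.
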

\begin{proof}
Part \eqref{claim:increasing1} is trivial. Part \eqref{claim:increasing2} is an immediate consequence of Clifford's inequality and Riemann's theorem
(which are well known to hold on an irreducible nodal curve $C$). 
\end{proof}
\begin{proof} [\it Proof of Proposition~\ref{component-general}]
Consider the distance-based partition \  $V(\G)=Z_0\sqcup\ldots \sqcup Z_m$ starting at $C_0$, defined in (\ref{partition}).
For every $i$ the vertex set $Z_i$ corresponds  to a subcurve, also written $Z_i$, of $X_0$.  We thus get a decomposition
$X_0=Z_0\cup\ldots \cup Z_m$.

We denote by $s_i$ the quantity appearing in the right term of inequalities~\eqref{eq1}:
$s_i:= 2r_i$ if $r_i \leq g_i-1$ and $s_i = r_i+g_i$ if $r_i \geq g_i$.

The proof of the proposition proceeds by an induction on $r$.

For the base of the induction, i.e.  the case $r=0$, 
we have  $r_i =0$ for all $i\geq 0$. 
We have to show the existence of 
an effective divisor $E\in \Div(\G)$ such that
$E\sim \tau(\L)$. This trivially 
follows from our hypothesis because $\mathcal{L}(b_0)$ 
has a nonzero global section and so $\tau(\mathcal{L})$ itself is effective.

\

Consider now $r \geq 1$ and assume 
without loss of generality that $r_0 \neq 0$. 
By  the induction hypothesis (applied to $r-r_0$ and the sequence $r'_0=0, r_1'=r_1, \dots, r'_n=r_n$)
we can choose $\L$ so that  
  for the divisor $E =\tau(\mathcal L)$, all the Inequalities~\eqref{eq1} 
  are verified for $i\geq 1$, and $E(C_0) \geq 0$. 
  Furthermore,  we will assume that  $E$   maximizes the vector 
  $(E(C_0), E(Z_1), \dots, E(Z_m))$ in the lexicographic order, i.e., 
  $E(C_0)$ is maximum among all elements in $ |\tau(\L)|$ verifying Inequalities~\eqref{eq1} 
  for $i \geq 1$,  next, we require that $E(Z_1)$ be maximum among all  such
$E$, and so on. Up to changing $\L$ within its   $\phi$-equivalence class we can assume that
$E=\tau (\L)$.  Note that by Lemma~\ref{philm}(2), 
the new $\mathcal{L}$ is still satisfying the hypothesis of the proposition.

\noindent In order to prove the proposition, we need to show that $E(C_0) \geq s_0$. 

\medskip

We now consider  (see example~\ref{L(C)})
$$\L':=\L(-C_0)\in \Picphi(B).$$ 
We denote  $L_0=\L(b_0)\in \Pic(X_0)$, and similarly $L'_0=\L'(b_0)\in \Pic(X_0)$. 
  
\begin{claim}\label{claim2}
The dimension of the space of global sections of $L'_0$ which  identically vanish on $\overline{X_0\smallsetminus C_0} $ is at least $r_0 +1$. 
\end{claim}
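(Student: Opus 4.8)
The plan is to produce the required sections by a restriction argument on the decomposition $X_0 = C_0 \cup Y$, where $Y := \overline{X_0 \smallsetminus C_0}$, and to control the deficiency on $Y$ by means of the lexicographic maximality of $E$.

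First I would pass to a finite \'etale base change so that $\L$ and $\L' = \L(-C_0)$ are represented by honest line bundles on $\mathcal{X}$, exactly as in the proof of Lemma~\ref{philm}, keeping the notation $L_0 = \L(b_0)$ and $L'_0 = \L'(b_0)$. Since $\L' = \L(-C_0) \sim_{\phi} \L$ by Example~\ref{L(C)}, and $h^0(X_b, \L(b)) \geq r+1$ for every $b \neq b_0$ by hypothesis, Lemma~\ref{philm}(2) yields $h^0(X_0, L'_0) \geq r+1$. Next, the sections of $L'_0$ that vanish identically on $Y$ are precisely the kernel of the restriction homomorphism $H^0(X_0, L'_0) \to H^0(Y, L'_0|_Y)$. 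Comparing dimensions gives
$$
\dim\{s \in H^0(X_0, L'_0) : s|_Y = 0\} \geq h^0(X_0, L'_0) - h^0(Y, L'_0|_Y) \geq (r+1) - h^0(Y, L'_0|_Y).
$$
As $r = r_0 + r_1 + \dots + r_n$, the claim is thereby reduced to the single upper bound $h^0(Y, L'_0|_Y) \leq r_1 + \dots + r_n$.

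It remains to establish this bound, and this is where I expect the main obstacle to lie. For each component $C_i \subset Y$ the restriction $L'_0|_{C_i}$ has degree $E(C_i) - (C_0 \cdot C_i)$, and the inductive hypothesis already guarantees $E(C_i) \geq s_i$ for $i \geq 1$. A purely component-by-component estimate using Clifford's and Riemann's inequalities (Claim~\ref{claim:increasing}) over-counts, since it both ignores the gluing conditions at the internal nodes of $Y$ and fails to see the components non-adjacent to $C_0$; the naive bound $h^0(Y, L'_0|_Y) \leq \sum_i h^0(C_i, L'_0|_{C_i})$ is therefore too weak. The key should be to exploit the lexicographic maximality of $(E(C_0), E(Z_1), \dots, E(Z_m))$: if $h^0(Y, L'_0|_Y)$ were strictly larger than $r_1 + \dots + r_n$, one could twist $\L$ by a suitable $\Z$-linear combination of the components of $Y$, organised according to the distance-based partition $Z_1, \dots, Z_m$, so as to strictly increase $E(C_0)$ while preserving every inequality \eqref{eq1} for $i \geq 1$, contradicting the choice of $E$.

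Concretely, I would peel the partition outward from $C_0$, treating the layers $Z_1, Z_2, \dots, Z_m$ in succession and applying Claim~\ref{claim:increasing} to each component as it is reached, while carefully accounting for the nodes joining consecutive $Z_j$ (where, by the distance partition, $Z_i \cdot Z_j \neq 0$ only when $|i-j| = 1$). The delicate point is the bookkeeping that converts the layer-by-layer Clifford/Riemann estimates together with the maximality into the clean inequality $h^0(Y, L'_0|_Y) \leq r_1 + \dots + r_n$; once this is in hand, the displayed inequality above furnishes the desired space of sections of $L'_0$ vanishing on $Y$ of dimension at least $r_0 + 1$, proving the claim.
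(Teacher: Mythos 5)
Your opening moves match the paper's: pass to a base change so that $\L'=\L(-C_0)$ is an honest line bundle, use $\L'\sim_\phi\L$ and Lemma~\ref{philm}(2) to get $h^0(X_0,L'_0)\geq r+1$, and then bound the codimension of the kernel of restriction to $W_0=\overline{X_0\smallsetminus C_0}$. But the reduction you make next is an overshoot that changes the problem into one you cannot solve (and which may simply be false): you reduce to $h^0(W_0,L'_0|_{W_0})\leq r_1+\cdots+r_n$, whereas all that is needed --- and all the paper proves --- is that the \emph{image} of the restriction map $H^0(X_0,L'_0)\to H^0(W_0,L'_0)$ has dimension at most $r_1+\cdots+r_n$. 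These are not the same: $h^0(W_0,L'_0|_{W_0})$ can a priori be large on components far from $C_0$ (the inequalities \eqref{eq1} are only lower bounds on $E(C_i)$, and $E'(C_i)=E(C_i)$ for components not meeting $C_0$), and nothing in your proposal controls it.

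The second, more serious issue is that the entire content of the claim is in the ``delicate bookkeeping'' you defer. The paper's mechanism is an iterative filtration: $Y_1$ is the (nonempty, by maximality of $E(C_0)$) union of components where $E'$ violates \eqref{eq1}, and at stage $h$ one sets $Y_h$ to be the components $C_j\subset W_{h-1}$ with $E(C_j)+C_j\cdot W_{h-1}<s_j$. The point is that a section already vanishing on $Y_1\cup\cdots\cup Y_{h-1}$ restricts on $C_j\subset Y_h$ to a section of $L'_0|_{C_j}$ twisted down by $C_j\cdot\sum_{i<h}Y_i$, and it is precisely this twist that pushes the degree below $s_j$ so that Claim~\ref{claim:increasing} gives $h^0\leq r_j$ (a direct Clifford bound on $L'_0|_{C_j}$ for $C_j$ not adjacent to $C_0$ only gives $r_j+1$). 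The lexicographic maximality enters not as a vague ``twist to increase $E(C_0)$'' but in a specific termination argument: if $Y_h=\emptyset$ while $W_{h-1}\neq\emptyset$, one adds the principal divisor $T_{W_{h-1}}$ to $E$ and deduces successively $W_{h-1}\cdot C_0=0$, $W_{h-1}\cdot Z_1=0$, \dots, forcing $W_{h-1}\subset Z_{m+1}=\emptyset$. Since your proposal contains neither the corrected target (rank of the restriction map rather than $h^0$ of the restricted sheaf), nor the definition of the exhausting subcurves, nor the use of $T_{W_{h-1}}$, it identifies where the difficulty lies but does not constitute a proof.
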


Set  $W_0=\overline{X_0\smallsetminus C_0}.$ 
To prove the claim, set $E'=\tau(\L')=\mdeg L'_0$, so that $E'\sim E$.
Now,   for every component $C\subset X_0$
we have
\begin{equation}
\label{degL'}
E'(C)=\deg _{C}L'_0=E(C)-C\cdot C_0;
\end{equation}
in particular  $E'(C_0) >E(C_0)$. 
Therefore, by the maximality of $E(C_0)$, the divisor $E'_0$ does not verify 
some of the inequalities in~\eqref{eq1} for $i\geq 1$, and so the subcurve $Y_1\subset X_0$ defined below is not empty
$$
Y_1:=
\bigcup_{E'(C_i)< s_i}C_i=
 \bigcup_{E(C_i)+C_i\cdot W_0<s_i}C_i.
$$
Since the degree of $L'_0$ on each component $C_i$ of $Y_1$ is strictly smaller than $s_i$,
  by Claim~\ref{claim:increasing}\eqref{claim:increasing2} on  $C_i$ we have
$h^0(C_i,  L'_0 )\leq r_i$.
Let $\Lambda_1\subset H^0(X_0, L'_0)$ be the space of sections   which  vanish on $Y_1$, so that 
  we have  a  series of maps 
$$
0\la \Lambda_1=\ker\rho \la H^0(X_0, L'_0)\stackrel{\rho}{\la}  H^0(Y_1, L'_0)\hookrightarrow \bigoplus_{C_i \subset Y_1}H^0(C_i, L'_0)
$$\
where $\rho$ denotes the restriction. 
From this sequence and the above estimate we get 
$$
\dim \Lambda_1\geq h^0(X_0, L'_0)-\sum_{i: C_i \subset  Y_1} r_i\geq r+1-\sum_{i\geq 1} r_i=r_0+1.
$$
Hence  we are done if $Y_1=W_0$. Otherwise,    for $h\geq 2$ we iterate, setting
$$
W_{h-1}:=\overline{X_0\smallsetminus (C_0\cup Y_1\cup\ldots \cup Y_{h-1})}
 \quad \quad
{ \text{ and }}   \quad \quad
Y_h:=\bigcup_{\stackrel{C_i\subset W_{h-1},}{E(C_i) +C_i\cdot W_{h-1}<s_i}}C_i.
$$
Let $\Lambda_h\subset H^0(X_0, L'_0)$ denote  the space of sections   which  identically vanish on $Y_1 \cup \dots \cup Y_h$. 
We will prove  that  $\codim \Lambda_h\leq   \sum_{i: C_i \subset Y_1\cup \dots \cup Y_{h}}r_i$, and that $Y_h$ is empty only if $W_{h-1}$ is empty. This will finish the proof of Claim~\ref{claim2}.

To prove the first statement we use  induction on $h$. 
The base case $h=1$ has been done above.
Consider    $C_j\subset Y_h$, so that
  $E(C_j) <s_j-C_j\cdot W_{h-1}$, hence
$$
E'(C_j)=E(C_j)-C_0\cdot C_j<s_j -C_j\cdot  W_{h-1}-C_0\cdot C_j =s_j  + C_j\cdot (\sum_{i=1}^{h-1}Y_i).
$$
 as  $C_j\cdot W_{h-1}=-C_j\cdot (C_0+\sum_{i=1}^{h-1}Y_i)$.
Hence $(L'_0)_{|C_j}(- C_j\cdot\sum_{i=1}^{h-1}Y_i)$ has degree smaller than $s_j $, therefore   by Claim~\ref{claim:increasing}\eqref{claim:increasing2} on  $C_j$,
\begin{equation}
\label{Cj}
h^0(C_j, L'_0(- C_j\cdot\sum_{i=1}^{h-1}Y_i)\leq r_j.
\end{equation}
Let us denote   by $\rho_h:\Lambda_{h-1} \to H^0(Y_h, L'_0)$ the restriction map. Then
we have  the following series of maps 
$$
0\la \Lambda_h=\ker \rho_h\la \Lambda_{h-1} \stackrel{\rho_h}{\la}  {\operatorname {Im}} \rho_h \hookrightarrow \bigoplus_{C_j \subset Y_h}H^0(C_j, L'_0(- C_j\cdot\sum_{i=1}^{h-1}Y_i).$$
Hence  the codimension of $\Lambda_h$ in $\Lambda_{h-1}$, written $\codim_{\Lambda_{h-1}}\Lambda_h$,  is at most the dimension of the space on the right, which, by \eqref{Cj},  is at most
$\sum _{j:C_j \subset Y_h}r_j$. Therefore
$$
\codim \Lambda_h=\codim \Lambda_{h-1}+\codim_{\Lambda_{h-1}}\Lambda_h\leq 
\sum_{i:C_i\subset Y_1\cup \dots \cup Y_{h-1}} r_i+\sum _{j:C_j \subset Y_h}r_j
$$
where we used  the  induction hypothesis on $\Lambda_{h-1}$. The first claim is proved.

For the proof of the second statement, suppose, by contradiction, $Y_h=\emptyset$ and 
$ W_{h-1}\neq \emptyset$. Set
\begin{equation}
\label{Eh}
E_{h}:=E+T_{W_{h-1}}
\end{equation} 
 where $T_{W_{h-1}}\in \Prin(\G)$ as defined in (\ref{defT});
hence  $E_{h}\sim E$. 

Since $Y_h$ is empty, we get $E_{h}( C )\geq s_i$ for any 
$C \subseteq W_{h-1}$. On the other hand, 
for any $C \subset \overline{X\smallsetminus W_{h-1}}$, we have $E_{h}( C ) \geq E( C )$.
Therefore, by the choice of $E$, and the maximality assumption, we must have
$E_{h}( C_0) =E( C_0 )$, i.e.,  $W_{h-1}\cdot C_0=0$. 
Therefore 
$W_{h-1}\subset \cup_{j\geq 2}Z_j$ and hence
 $W_{h-1}\cdot Z_1\geq 0$. In particular, we have $E_h(Z_1) \geq E(Z_1)$. But, by
the maximality of $E(Z_1)$, we must have
$E_{h}(Z_1)=E(Z_1)$, i.e., $W_{h-1}\cdot Z_1=0$.
Therefore  
$W_{h-1}\subset \cup_{j\geq 3}Z_j$.
Repeating this argument, we conclude that  $W_{h-1}\subset  Z_{m+1} = \emptyset,$ which is a contradiction.
Claim~\ref{claim2} is proved.

\

Let $\Lambda$ be the set of sections of $L'_0$ which identically vanish on $W_0$;   by the   claim, $\dim \Lambda \geq r_0+1$.  We have a natural injection
$\Lambda \ha H^0(C_0, L'_0(-C_0\cap W_0))=H^0(C_0,L_0)$, hence
 $r_0+1 \leq h^0(C_0, L_0).$

Set $\widehat{r_0}:=h^0(C_0, L_0)-1$ so that $\widehat{r_0}\geq r_0$. 
 By Claim~\ref{claim:increasing}\eqref{claim:increasing2} on    $C_0$  we obtain,
\begin{displaymath}
E(C_0)=\deg _{C_0}L_0\  \left\{ \begin{array}{ll}
\geq 2\widehat{r_0} & \text { if } \widehat{r_0}\leq g_0-1\\
\\
= \widehat{r_0}+g_0 & \text { if } \widehat{r_0}\geq g_0.\\
\end{array}\right .
\end{displaymath}
By Claim~\ref{claim:increasing} \eqref{claim:increasing1}, we infer that $E(C_0) \geq s_0$ , and the proof of Proposition~\ref{component-general} is  complete.
\end{proof}

 \begin{thm}[Specialization Lemma]
\label{spe}
Let $\phi:\X \to B$ be a regular one-parameter 
smoothing of  a projective nodal  curve $X_0$.
Let $(\G,\omega )$ be the weighted dual graph of $X_0$.
Then for every $\L\in \Picphi(B)$ there exists an open neighborhood $U\subset B$ of $b_0$ such that for every $b\in U$  such that $b\neq b_0$
\begin{equation}
\label{mixed}
r(X_b, \L (b))\leq r_{(\G,\omega )}(\tau (\L)).
\end{equation}
\end{thm}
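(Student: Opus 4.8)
The goal is to bound the rank of the specialized line bundle $\L(b)$ on the generic nearby fiber $X_b$ by the weighted graph rank $r_{(\G,\omega)}(\tau(\L))$. The plan is to proceed by contraposition: I would assume $r_{(\G,\omega)}(\tau(\L)) < r(X_b,\L(b))$ for $b$ near $b_0$, set $r := r(X_b,\L(b))$, and derive a contradiction by producing an effective divisor on the virtual graph $\G^\omega$ equivalent to $\tau(\L)$ that witnesses too large a rank. The essential new feature, compared to the weightless specialization lemma of \cite{bakersp,CBNgraph}, is that the weights $\omega(v) = g_v$ (the geometric genera of the components $C_v$) must be absorbed by the virtual loops attached at each vertex, and this is precisely where Proposition~\ref{component-general} enters.

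First I would recall that $r_{(\G,\omega)}(\tau(\L)) = r_{\hGw}(\tau(\L))$ by \eqref{defrw}, so the target inequality is really a statement about ranks on the loopless virtual graph $\hGw$, on which the Baker--Norine theory (and hence Proposition~\ref{component-general}, applied with the geometric genera as the $g_i$) is available. The strategy is to show that if $h^0(X_b,\L(b)) \geq r+1$ for all $b \neq b_0$ in a neighborhood $U$, then $\sigma^*\tau(\L)$ has rank at least $r$ on $\hGw$. Concretely, given an arbitrary effective divisor of degree $r$ on $\hGw$, I must exhibit a linearly equivalent effective divisor in $|\sigma^*\tau(\L) - E|$. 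The key is to use a rank-determining set (by \cite{luo,HKN}, $V(\G)$ type vertex sets suffice) so that it is enough to treat divisors $E$ supported on $V(\G) \subset V(\hGw)$, say $E = \sum_i r_i C_i$ with $\sum r_i = r$. For such an $E$, Proposition~\ref{component-general} furnishes an effective divisor $\widetilde E \in \Div(\G)$ with $\widetilde E \sim \tau(\L)$ and $\widetilde E(C_i) \geq 2r_i$ when $r_i \leq g_i - 1$ and $\widetilde E(C_i) \geq r_i + g_i$ when $r_i \geq g_i$.

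The crucial local computation is then to verify that on the virtual loops this lower bound is exactly what is needed. Attached at vertex $C_i$ we have $\omega(v) = g_i$ virtual $1$-cycles, which in $\hGw$ become $2$-cycles (each loop subdivided by one vertex); by Lemma~\ref{rose} the rank-$r_i$ condition local to a genus-$g_i$ rose is governed by exactly the threshold function $s(t)$ of Claim~\ref{claim:increasing}. So the degree bound $\widetilde E(C_i) \geq s_i = s(r_i)$ guarantees that after moving the appropriate multiples of the loop-generators $T_{u}$ (as in the proof of Lemma~\ref{rose}) one can subtract $r_i$ from each $C_i$ and still remain effective on $\hGw$. Summing these local moves over all $i$ produces the desired element of $|\sigma^*\tau(\L) - E|$, establishing $r_{\hGw}(\tau(\L)) \geq r$, a contradiction. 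Finally, the open neighborhood $U$ and the hypothesis $h^0(X_b,\L(b)) \geq r+1$ for $b \neq b_0$ come from lower-semicontinuity of $h^0$ together with Lemma~\ref{philm}\eqref{philm2}, exactly as in the classical case.

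I expect the main obstacle to be the passage from the analytic/cohomological rank bound on $X_b$ to the combinatorial rank on $\hGw$ in a way that correctly accounts for the weights: one must ensure that the degree gained on each component $C_i$ (the content of Proposition~\ref{component-general}) translates into genuine effectivity after loop-moves on the virtual graph, and that reducing to $E$ supported on $V(\G)$ via a rank-determining set is legitimate. The algebro-geometric input — that $h^0$ on an irreducible nodal curve of genus $g_i$ obeys Clifford and Riemann, encapsulated in Claim~\ref{claim:increasing}\eqref{claim:increasing2} — is what makes the weighted bound $s_i$ sharp rather than the trivial weightless bound, and getting the bookkeeping of virtual loops to match Lemma~\ref{rose} precisely is the delicate point.
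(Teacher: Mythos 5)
Your overall architecture (reduce to showing $r_{\hGw}(\tau(\L))\geq r$, then feed Proposition~\ref{component-general} into a local rank computation on the virtual roses via Lemma~\ref{rose}) matches the paper's, but there is a genuine gap at the step where you invoke a rank-determining set to restrict attention to effective divisors $E$ supported on $V(\G)$. The set $V(\G)$ is \emph{not} rank-determining in $\hGw$. The results of \cite{luo} and \cite{HKN} that the paper uses (in the proof of Proposition~\ref{sepref}) apply to subdivisions $G^{(m,n)}$, i.e.\ to different models of the same metric graph; $\hGw$ is not a subdivision of $\G$ but carries $\sum_v\omega(v)$ extra independent cycles, and a rank-determining set must meet each of them. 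A minimal counterexample: take $\G$ a single vertex $v$ of weight $1$, so $\hGw=C_2$ with vertices $v,u$. For $D=v$ one has $r_{\hGw}(D)=0$ because $|v-u|=\emptyset$ (here $\Prin(C_2)$ is generated by $2u-2v$), yet testing only against effective divisors supported at $v$ would give rank $1$. More generally, for $D=dv$ with $d\leq 2\omega(v)$ the true rank is $\lfloor d/2\rfloor$ by Lemma~\ref{rose}, while the ``$V(\G)$-restricted rank'' is $d$: the divisors that force the rank down to the weighted value are exactly those supported at the midpoints of the virtual $2$-cycles, i.e.\ exactly the ones your reduction discards. A symptom of this inside your own argument: once $E=\sum_i r_iC_i$, the bound $\widetilde E(C_i)\geq s(r_i)\geq r_i$ already makes $\widetilde E-\sum_i r_iC_i$ effective with no loop moves at all, so the Clifford-type strength of Proposition~\ref{component-general} is never actually used --- the content of the theorem has been lost in the reduction.

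The paper's proof makes the opposite case split. Proposition~\ref{component-general}, applied with $r_i=\deg E_{R_i}$, is used precisely for the divisors $E$ with $E(v)=0$ for all $v\in V(\G)$, i.e.\ supported on the virtual part; there one checks $r_{R_i}(F_{R_i}-E_{R_i})\geq 0$ on each rose $R_i$ using Lemma~\ref{rose} and Riemann, and concludes via the cut-vertex Lemma~\ref{sepvertex}\eqref{sepvertex3}. Divisors $E$ with $E(v)\neq 0$ for some $v\in V(\G)$ are handled by a separate induction on $r$: after a finite \'etale base change one produces a section $\sigma$ of $\phi$ through the component corresponding to $v$, replaces $\L$ by $\L\otimes\O(-\sigma(B))$ (whose specialization is $D-v$ and whose fibers still have $h^0\geq r$), and applies the inductive hypothesis to $D-v$ and $E-v$. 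Both this induction and the attendant algebro-geometric step are absent from your proposal, so even after repairing the rank-determining claim the argument would be incomplete. Two smaller points: the relevant semicontinuity of $h^0$ is upper, not lower; and the case where $\G$ itself has loop-edges needs the preliminary reduction the paper performs at the end of its proof.
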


\begin{proof}
To simplify the presentation, we will assume   $G$   free from loops,
and indicate, at the end, the (trivial)  modifications   needed to get the proof in general.

\medskip

Up to restricting $B$ to an open neighborhood of $b_0$ we can assume that for some $r\geq -1$ and  for every $b\in B$ we have
\begin{equation}
\label{h0}
h^0(X_b, \L(b))\geq r+1  
\end{equation}
with equality for $b\neq b_0$. Set $D=\tau (\L)$;
 we must prove that $r_{(\G,\omega )}(D)\geq r$.

As in Proposition~\ref{component-general}, we write $C_0, C_1,\dots, C_n$ for the irreducible components of $X$,  
 with $C_i$ of   genus $g_i$. We denote by $v_i\in V(G)$ the vertex corresponding to $C_i$.

Recall that
we denote by  $\hGw$ 
the weightless, loopless graph   obtained from $G$ by adding $g_i=\omega(v_i)$ \  2-cycles at $v_i$
for every $v_i\in V(G)$.
We have a natural injection (viewed as an inclusion)
$ 
\Div(G)\subset    \Div (\hGw)
$  and,
 by definition, $r_{(\G,\omega )}(D)=r_{\hGw}(D)$.
Summarizing, we must prove  that
 \begin{equation}
\label{thspec}
r_{\hGw}(D)\geq r.
\end{equation}
The  specialization Lemma for weightless graphs   gives  that
the rank of  $D$, as a divisor on the  weightless  graph $\G$, satisfies
\begin{equation}
\label{oldspec}
r_{\G}(D) \geq r.
\end{equation}

Now observe that the graph obtained by removing from $\hGw$ every edge of $G$
is a disconnected (unless $n=0$) graph $R$ of type
$$
R=\sqcup_{i=0}^nR_i
$$
 where $R_i=\widehat{R_{g_i}}$ is the refinement of the ``rose" $R_{g_i}$ introduced in \ref{rose}, 
for every $i=0,\ldots,n$. Note that if $g_i=0$, the graph $R_i$ is just the  vertex $v_i$ with no edge.
 
Now, extending   the notation of \ref{sepvertex} to the case of multiple cut-vertices,
 we have  the following decomposition of $\hGw$
$$
\hGw=G\vee R 
$$
with $G\cap R=\{v_0,\ldots, v_n\}$.
By Lemma~\ref{sepvertex}\eqref{sepvertex3}   for any $D\in \Div(G)$ such that $r_{G}(D)\geq 0$ we have
  $r_{\hGw}(D)\geq 0$.

\medskip

We are ready to prove (\ref{thspec}) using induction on $r$.
If $r=-1$ there is nothing to prove. If $r=0$,   by (\ref{oldspec}) we have $r_{G}(D)\geq 0$
and hence, by what we just observed,
$r_{\hGw}(D)\geq 0$. So we are done.

Let $r\geq 1$ and pick an effective divisor $E\in \Div^r(\hGw)$.

\noindent Suppose first that $E(v)=0$ for all $v\in V(G)$;   in particular, $E$ is entirely supported on $R$.   
We write $r_i$ for the degree of the restriction of $E$ to $R_i$, so that for every $i=0,\ldots, n,$
we have 
\begin{equation}
\label{inr}
r_i\geq 0 \quad\quad \quad {\text{ and }} \quad\quad \quad\sum_{i=0}^nr_i=r.
\end{equation}
 It is clear that   it suffices to prove the existence of an effective divisor $F \sim D$ such that
the restrictions $F_{R_i}$ and $E_{R_i}$ to $R_i$ satisfy
$r_{R_i}(F_{R_i}-E_{R_i})\geq 0$ for every $i=0,\ldots, n$.
 
\noindent By Proposition~\ref{component-general} there exists an effective divisor $F \sim D$ so that
\eqref{eq1} holds  for every $i=0,\ldots, n$, i.e. 
\begin{displaymath}
F(C_i)\geq\left\{ \begin{array}{ll}
2r_i & \text { if } r_i\leq g_i-1\\
\\
 r_i+g_i & \text { if } r_i\geq g_i.\\
\end{array}\right .
\end{displaymath}
(Proposition~\ref{component-general} applies because of the relations \eqref{inr}).
Now, $F(C_i)$ equals the degree of $F_{R_i}$, hence 
by the above  estimate combined with Theorem~\ref{RRloop}(\ref{Riemann}) and Lemma~\ref{rose},
one easily checks that $r_{R_i}(F_{R_i})\geq r_i$, 
hence,   $r_{R_i}(F_{R_i}-E_{R_i})\geq 0$.

We can now assume that $E(v)\neq 0$ for some 
$v\in V(G) \subset V(\hGw)$.
We write $E=E'+v$ with $E'\geq 0$ and $\deg E'=r-1$.

Arguing as for \cite[Claim 4.6]{CBNgraph}, we are free to
 replace $\phi:\X\to B$ by a finite  \'etale base change. Therefore we can assume that $\phi$ has a section 
$\sigma$ passing through the component of $X_0$ corresponding to $v$.  It is clear that for every $b\in B$
we have
$$
r(X_b, L_b(-\sigma(b)))\geq r(X_b, L_b)-1\geq r-1.
$$
Now, the specialization of $\L\otimes \O(-\sigma(B))$ is $D-v$, i.e.,
$$ 
\tau (\L\otimes \O(-\sigma(B)))=D-v.
$$
By induction we have $r_{\hGw}(D-v)\geq r-1$.
Hence, the degree of $E'$ being $r-1$, there exists $T\in \Prin (\hGw)$ such that
$$
0\leq D-v -E' +T= D-v-(E-v)+T=D-E+T.
$$
We thus proved that $0\leq r_{\hGw}(D-E)$ for every effective $E\in \Div ^r(\hGw)$.
This proves (\ref{thspec}) and hence the
 theorem, in case $G$ has no loops.

\medskip 

If $G$ admits some loops, let  $G'\subset G$ be the graph obtained by removing from $G$ 
all of its loop edges. Then $\hGw$ is obtained from $G'$ by adding to the vertex $v_i$ exactly
 $g_i$ \  2-cycles, where $g_i$ is the arithmetic genus of $C_i$
 (note than $g_i$ is now equal to $\omega(v_i)$ plus the number of loops adjacent to $v_i$ in $G$).
Now replace $G$ by $G'$ and use exactly the same proof. (Alternatively,
one could apply the same argument used in \cite[Prop. 5.5]{CBNgraph}, where the original Specialization Lemma of \cite{bakersp} was  
extended to weightless graphs admitting loops.)
\end{proof}
\section{Riemann-Roch on    weighted tropical curves}
\subsection{Weighted tropical curves as pseudo metric graphs}
Let $\Gamma=(\G, \omega,\ell)$ be a weighted tropical curve,
that is, $(\G,\omega )$ is a weighted graph (see Section~\ref{wgsec}) and $\ell:E(\G)\to \R_{>0}$ is a (finite) length function on the edges.
We also say that $(\G, \ell)$ is  a {\it metric graph}.

If $\omega$ is the zero function, we write $\omega=\mo$ and say that the tropical curve is {\it pure}.

Weighted tropical curves were used in \cite{BMV} to bordify the space of pure tropical curves;
notice however that we use the slightly different terminology of \cite{CHBK}.

For  pure tropical curves there exists a good divisor theory for which   the  Riemann-Roch theorem holds, as proved by Gathmann-Kerber in \cite{GK} and by Mikhalkin-Zharkov in \cite{MZ}. The purpose of this section is 
 to   extend this to the weighted setting.
 
 \
 
\noindent{\it Divisor theory on pure tropical curves.}
Let us quickly recall the set-up  for pure tropical curves; we refer to \cite{GK} for details. Let $\Gamma=(\G, \mo,\ell)$ be a pure tropical curve.
The group of divisors of $\Gamma$ is the free abelian group 
$\Div(\Gamma)$ generated by the points of $\Gamma$.

 A {\it rational function} on $\Gamma$ is a continuous function $f:\Gamma \to \R$
such that the restriction of $f$ to every edge of $\Gamma$ is a piecewise affine
integral function
(i.e., piecewise of type $f(x)=ax+b$, with $a\in \Z$) 
 having finitely many pieces.

Let $p\in \Gamma$ and let $f$ be a rational function as above. The order of $f$ at $p$, written 
$\ord_pf$, is the sum of all the slopes of $f$ on the outgoing segments of $\Gamma$ adjacent to $p$.
The number of such segments is equal to the valency of $p$ if $p$ is a vertex of $\Gamma$, and is equal to 2 otherwise. The divisor of $f$ is defined as follows
$$
\div (f):=\sum_{p\in \Gamma}\ord_p(f)p\in \Div(\Gamma).
$$
Recall that $\div f$ has degree $0$. The  divisors of the from $\div (f)$ are called {\it principal} and they form a subgroup of $\Div (\Gamma)$, denoted by $\Prin (\Gamma)$. Two divisors $D,D'$ on $\Gamma$ are said to be linearly equivalent,
written $D\sim D'$,  if 
$D-D'\in \Prin (\Gamma)$.

Let $D\in \Div \Gamma$. Then $R(D)$ denotes  the set of rational functions on $\Gamma $ such that $\div (f)+D\geq 0$.
The rank of $D$ is defined as follows
$$
r_{\Gamma}(D):=\max \,\{k:\  \forall E\in \Div^k_+ (\Gamma),  \  \  R(D-E)\neq \emptyset\}
$$
so that $r_{\Gamma}(D)=-1$ if and only if $R(D)=\emptyset$.

The following trivial remark is a useful consequence of the definition.
\begin{remark}
\label{trivrk}
Let $\Gamma_1$ and $\Gamma_2$ be  pure tropical curves and let
$\psi:\Div(\Gamma_1)\to\Div(\Gamma_2)$ be a group isomorphism 
inducing an isomorphism of effective and principal divisors (i.e.,
$\psi(D)\geq 0$ if and only if $D\geq 0$, and $\psi(D)\in \Prin (\Gamma_2)$ 
 if and only if $D\in \Prin (\Gamma_1)$).
 Then for every $D\in \Div(\Gamma_1)$ we have
 $r_{\Gamma_1}(D)=r_{\Gamma_2}(\psi(D)).$
 \end{remark}

\

To extend the theory to the weighted setting, our
  starting point is to give weighted tropical curves  a geometric interpretation     by
what we call pseudo-metric graphs.
\begin{defi}
A {\it pseudo-metric graph} is a pair $(\G,\ell)$ where $\G$ is a  graph and $\ell$ a {\it pseudo-length} function
$\ell:E(\G)\to \R_{\geq 0}$ which is allowed to vanish only on loop-edges of $\G$
(that is, if $\ell(e)=0$ then $e$ is a loop-edge of $G$).
\end{defi}
Let $\Gamma=(\G, \omega,\ell)$ be a weighted tropical curve, we associate to it the pseudo-metric graph,
 $ (\G^{\omega} ,\ell^\omega )$, defined as follows.  $\G^{\omega} $ is the ``virtual" weightless graph associated to $(\G,\omega )$ described in   subsection~\ref{wgsec} ($\G^{\omega} $ is obtained by attaching to $\G$ exactly $\omega (v)$ loops based at every vertex $v$);
 the function $\ell^\omega :E(\G^\omega )\to \R_{\geq 0}$ is the extension of $\ell$ vanishing at all the virtual loops.

It is clear that $(\G^{\omega} ,\ell^\omega )$ is uniquely determined. Conversely, to any pseudometric graph
$(\G_0,\ell_0)$ 
we can associate a unique
weighted tropical curve $(\G, \omega ,\ell)$ 
such that 
$G_0=G^{\omega}$ and $\ell_0=\ell^{\omega}$
as follows. $\G$ is the subgraph of $\G_0$ obtained by removing
every loop-edge $e\in E(\G)$ such that $\ell_0(e)=0$. Next, $\ell$ is the restriction of $\ell_0$ to $\G$; finally, 
for any $v\in V(\G)=V(\G_0)$ the weight  $\omega (v)$ is defined to be equal to the number of loop-edges of $\G^0$
adjacent to $v$ and having
length $0$.

Summarizing, we have proved the following.
\begin{prop}
\label{pseudo}
The map associating to the weighted tropical curve $\Gamma=(\G, \omega,\ell)$
the pseudometric graph $(\G^{\omega} ,\ell^\omega )$ is a bijection between
the set of weighted tropical curves and the set of pseudometric graphs, extending 
 the bijection between pure tropical curves and metric graphs (see   \cite{MZ}).
\end{prop}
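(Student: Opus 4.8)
The plan is to verify directly that the two explicit constructions given just before the statement are mutually inverse, and then to observe that the forward map restricts to the classical correspondence on pure curves. That is, I would treat the forward assignment $\Gamma=(\G,\omega,\ell)\mapsto(\G^\omega,\ell^\omega)$ and the backward assignment $(\G_0,\ell_0)\mapsto(\G,\omega,\ell)$ as candidate inverse maps and check the two round-trips.

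First I would confirm that the forward map actually lands among pseudo-metric graphs. By construction $\ell^\omega$ agrees with $\ell$ on $E(\G)$, where it is strictly positive, and equals $0$ exactly on the virtual loops $C^i_v$; since every such virtual loop is a loop-edge of $\G^\omega$, the function $\ell^\omega$ vanishes only on loop-edges, so $(\G^\omega,\ell^\omega)$ is indeed a pseudo-metric graph. Its being uniquely determined by $\Gamma$ is immediate from the construction of $\G^\omega$ in Subsection~\ref{wgsec}.

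Next I would check that the backward assignment is a two-sided inverse. Starting from $\Gamma=(\G,\omega,\ell)$ and forming $(\G^\omega,\ell^\omega)$, the loop-edges of $\G^\omega$ of length $0$ are precisely the virtual loops; deleting them returns $\G$, restricting $\ell^\omega$ returns $\ell$, and the number of zero-length loops at $v$ is $\omega(v)$ by definition. Conversely, starting from a pseudo-metric graph $(\G_0,\ell_0)$, passing to $(\G,\omega,\ell)$ and re-applying the forward map reattaches exactly $\omega(v)$ zero-length loops at each $v$, recovering $(\G_0,\ell_0)$. Hence the two maps are inverse bijections. For the asserted compatibility, if $\omega=\mo$ then no virtual loops are attached, so $\G^\omega=\G$ and $\ell^\omega=\ell$; the forward map is thus the identity on pure tropical curves, which is exactly the classical bijection with metric graphs of \cite{MZ}.

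I expect the only point genuinely requiring care—rather than a true obstacle, since the statement is essentially a bookkeeping verification—to be the first round-trip: one must see that deleting the zero-length loops of $\G^\omega$ recovers $\G$ \emph{without} destroying any honest loop-edges that $\G$ itself may carry. This is precisely where the hypothesis $\ell:E(\G)\to\R_{>0}$ (as opposed to $\R_{\geq 0}$) enters, guaranteeing that every non-virtual edge, including any genuine loop of $\G$, has positive length and so survives the deletion, while the count of the remaining zero-length loops at $v$ faithfully reproduces $\omega(v)$.
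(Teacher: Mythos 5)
Your proof is correct and follows essentially the same route as the paper, which simply exhibits the two explicit constructions and observes they are mutually inverse ("Summarizing, we have proved the following"). Your extra remark that strict positivity of $\ell$ on $E(\G)$ ensures genuine loop-edges of $\G$ survive the deletion step is a fair point of care, fully consistent with the paper's definitions.
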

\subsection{Divisors on weighted tropical curves.}
Let $\Gamma=(\G, \omega,\ell)$ be a weighted tropical curve.
There is a unique pure tropical curve having the same metric graph as $\Gamma$, namely the curve
$
\Gamma^{\mo}:=(\G, \mo,\ell).
$
Exactly as for pure tropical curves, we define the group of divisors of $\Gamma$   as the free abelian group generated by the points of $\Gamma$:
$$
\Div (\Gamma)= \Div(\Gamma^{\mo})=\{\sum_{i=1}^mn_i p_i,\  n_i\in \Z, \  p_i\in (G,\ell)\}.
$$
The canonical divisor of $\Gamma$ is
$$
K_{\Gamma}: =\sum_{v\in V(G)} (\val (v) +2\omega (v)-2)v  
$$
where $\val (v)$ is the valency of $v$ as vertex of the graph $G$.
Observe that there is an obvious identification of $K_{\Gamma}$ with $K_{(G, \omega )}$, in other words,
  the canonical divisor of $K_{\Gamma}$ is the canonical divisor of the virtual graph $\G^{\omega} $ associated to $(\G,\omega )$.

Consider the pseudo-metric graph associated to $\Gamma$ by the previous proposition: 
$(\G^{\omega} ,\ell ^\omega )$. Note that $(\G^{\omega} ,\ell ^\omega )$  is not  a tropical curve  as the length function  vanishes at the virtual edges.
We then define  a pure tropical curve, $\Gamma^{\omega} _{\e}$, for every $\e>0$
$$
\Gamma^{\omega} _{\e}=(\G^{\omega} ,\mo,\ell_{\e} ^\omega )
$$ where $\ell_\e^{\omega}  (e)=\e$ for every edge lying in  some virtual cycle, and $\ell_\e^\omega (e)=\ell(e)$ otherwise.
Therefore $(\G^{\omega},\ell ^\omega )$ is the limit of $\Gamma^{\omega} _{\e}$ as $\e$ goes to zero.
Notice that 
for every curve  $\Gamma^{\omega} _{\e}$ we have a natural inclusion

$$ 
\Gamma^{\mo}\subset \Gamma^{\omega} _{\e}
$$
(with $\Gamma^{\mo}$ introduced at the beginning of the subsection).  
We refer to the loops given by $\Gamma^{\omega} _{\e}\smallsetminus \Gamma^{\mo}$
as {\it virtual} loops.

Now, we have natural injective homomorphism for every $\e$
\begin{equation}
\label{iotae}
\iota_{\epsilon}:\Div (\Gamma)\ha  \Div(\Gamma_\e^\omega ) 
\end{equation}
and it is clear that $\iota_{\epsilon}$ induces an isomorphism 
of $\Div (\Gamma)$ with the subgroup of divisors on $\Gamma_\e^\omega$ supported on
 $\Gamma^{\mo}$.
\begin{thm}
\label{RRwc}
Let $\Gamma=(G,\omega,\ell)$ be a weighted tropical curve of genus $g$ and let
  $D\in \Div (\Gamma)$.
Using the above notation, the following hold.
\begin{enumerate}
\item 
\label{RRwc1}
The number $r_{\Gamma_\e^{\omega} }(\iota_{\epsilon}(D))$ is independent of $\e$. 
Hence we define 
$$
r_\Gamma(D):=r_{\Gamma_\e^{\omega} }(\iota_{\epsilon}(D)).$$
\item
\label{RRwc2} \emph{(}Riemann-Roch\emph{)} With the above definition, we have
$$
r_{\Gamma}(D)-r_{\Gamma}(K_{\Gamma}-D)=\deg D-g+1.
$$
\end{enumerate}
\end{thm}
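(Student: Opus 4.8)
The plan is to prove part~\eqref{RRwc2} as a formal consequence of part~\eqref{RRwc1} together with the Riemann--Roch theorem for pure tropical curves of \cite{GK, MZ}, and to put all the real work into part~\eqref{RRwc1}, which is an $\e$-independence statement.

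For part~\eqref{RRwc2} I would argue as follows. The curve $\Gamma^{\omega}_{\e}$ is a pure tropical curve whose genus equals $b_1(G^{\omega})=g(G,\omega)=g$ by \eqref{wg}. Its canonical divisor is supported on $V(G)$: every point inserted in the interior of a virtual loop is $2$-valent and so contributes $0$, while at a vertex $v\in V(G)$ the valency in $G^{\omega}$ is $\val_G(v)+2\omega(v)$. Hence $K_{\Gamma^{\omega}_{\e}}=\sum_{v\in V(G)}(\val_G(v)+2\omega(v)-2)v=\iota_{\e}(K_{\Gamma})$, using the description of $K_{\Gamma}$ from \eqref{wcan}. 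Applying the pure tropical Riemann--Roch formula on $\Gamma^{\omega}_{\e}$ to the divisor $\iota_{\e}(D)$ yields
$$
r_{\Gamma^{\omega}_{\e}}(\iota_{\e}(D))-r_{\Gamma^{\omega}_{\e}}(K_{\Gamma^{\omega}_{\e}}-\iota_{\e}(D))=\deg D-g+1 .
$$
Since $K_{\Gamma^{\omega}_{\e}}-\iota_{\e}(D)=\iota_{\e}(K_{\Gamma}-D)$, part~\eqref{RRwc1} identifies the two ranks on the left with $r_{\Gamma}(D)$ and $r_{\Gamma}(K_{\Gamma}-D)$, and the formula of \eqref{RRwc2} follows. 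So everything reduces to showing that the rank does not depend on $\e$.

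For part~\eqref{RRwc1} the strategy is to compute $r_{\Gamma^{\omega}_{\e}}(\iota_{\e}(D))$ by a purely combinatorial rank that cannot see the lengths of the virtual loops. First I would fix $D$, which is supported at finitely many points of $\Gamma^{\mo}$, and refine the model $G$ of $\Gamma^{\mo}$ so that all points of $\supp(D)$ and all endpoints of the original edges become vertices; subdividing non-loop edges changes nothing by \cite{HKN, luo}. This gives a finite model of $\Gamma^{\omega}_{\e}$ in which $\iota_{\e}(D)$ is supported on the vertex set, and the virtual loops appear as loop-edges of length $\e$. The point is then that the tropical rank of a vertex-supported divisor is computed by the combinatorial rank of the underlying loopless model (via the $1$-subdivision of the loops, as in Definition~\ref{loopless}), by \cite{HKN, luo}; and this combinatorial rank is manifestly independent of the edge lengths, hence of $\e$. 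The role of Proposition~\ref{thm:refinement} is exactly to absorb the only remaining $\e$-dependence: passing from $\e$ to $\e'$ changes the combinatorial model only by inserting a different number of vertices along the loop-edges coming from the virtual loops, and Proposition~\ref{thm:refinement} says this leaves the rank unchanged.

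The hard part will be to justify the passage from the tropical rank to this length-independent combinatorial rank for arbitrary real lengths, since an irrational metric graph has no finite integral model and the dictionary with combinatorial rank is cleanest for rational (after rescaling, integer) lengths. I expect to handle this either by localizing to a single virtual loop --- writing $\Gamma^{\omega}_{\e}=\Gamma_1\vee C$ with $C$ a circle of length $\e$ glued at a cut-vertex $v$ and $\supp(\iota_{\e}(D))\subset\Gamma_1$, and showing directly that the rank is insensitive to the length of $C$ because any rational function witnessing an equivalence has nonnegative order at every interior point of $C$, which constrains its slopes around $C$ to integer data realizable on a circle of any length --- or by combining a tropical rank-determining-set argument (\cite{luo}) with a rescaling to reduce the rational case to integer lengths and then invoking Proposition~\ref{thm:refinement}. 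This length-independence of the rank of a vertex-supported divisor is the delicate step on which the whole theorem rests; once it is in place, both parts follow as above.
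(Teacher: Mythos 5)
Your reduction of part~\eqref{RRwc2} to part~\eqref{RRwc1} is exactly the paper's: $K_{\Gamma^{\omega}_{\e}}=\iota_{\e}(K_{\Gamma})$ since the inserted points on virtual loops are $2$-valent, and the pure tropical Riemann--Roch theorem of \cite{GK,MZ} does the rest. The problem is part~\eqref{RRwc1}, where your proposal stops short of a proof. You reduce everything to the claim that the tropical rank of a vertex-supported divisor is computed by a length-independent combinatorial rank, but you then concede that you do not know how to establish this for arbitrary real lengths, and you offer only two candidate strategies. That concession is the gap: the entire theorem rests on precisely this step, and neither sketch is carried out. Moreover, the combinatorial-reduction route is somewhat misdirected here: the underlying metric graph $(G,\ell)$ has arbitrary real edge lengths that are \emph{fixed} throughout, so there is no need (and in general no way) to replace the whole curve by a finite integral model; the only thing that varies with $\e$ is the length of the virtual loops. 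Proposition~\ref{thm:refinement} is a statement about combinatorial graphs and cannot by itself absorb a change of a real length parameter.

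The paper's actual argument is a direct, global version of the first fallback you sketch. For $\e_1,\e_2>0$ one takes the homothety of ratio $\e_2/\e_1$ on all virtual loops, giving a homeomorphism $\psi^{(\e_1,\e_2)}:\Gamma^{\omega}_{\e_1}\to\Gamma^{\omega}_{\e_2}$ that is the identity on $\Gamma$, hence a group isomorphism $\psi^{(\e_1,\e_2)}_*$ on divisors compatible with the inclusions $\iota_{\e_i}$ and manifestly preserving effectivity. The only point needing care is that principal divisors map to principal divisors: given a rational function $f$ on $\Gamma^{\omega}_{\e_1}$, one rescales its values on each virtual loop by $\alpha(x)=x\,\e_2/\e_1$ (setting $\alpha\bullet f(u)=f(v)+\alpha(f(u)-f(v))$ for $u$ on a virtual loop attached at $v$, and $\alpha\bullet f=f$ on $\Gamma$); since both the lengths and the value differences scale by the same factor, all slopes --- in particular their integrality --- are preserved, so $(\alpha\bullet f)\circ\psi^{(\e_2,\e_1)}$ is a rational function on $\Gamma^{\omega}_{\e_2}$ with $\div\bigl((\alpha\bullet f)\circ\psi^{(\e_2,\e_1)}\bigr)=\psi^{(\e_1,\e_2)}_*(\div f)$. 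Remark~\ref{trivrk} then gives equality of ranks. This is self-contained, works for arbitrary real $\ell$ and $\e$, and makes no appeal to \cite{HKN} or \cite{luo}. If you want to salvage your approach, your ``localize to a single circle at a cut vertex and analyze slopes'' idea is the right germ --- but you must actually carry out that slope analysis (which, done globally, is exactly the $\alpha\bullet f$ construction above) rather than leave it as one of two possibilities.
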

\begin{proof}
The proof of (\ref{RRwc1}) can be obtained by a direct limit argument to compute $r_{{\Gamma}_\e^{\omega} }(D)$, using Proposition~\ref{thm:refinement}.  A direct proof    is as follows. 

For two $\e_1, \e_2>0$, consider the 
homothety of ratio $\e_2/\e_1$ on all the virtual loops. This produces a homeomorphism$$\psi^{(\e_1,\e_2)}: \Gamma^\omega_{\e_1} \la \Gamma^\omega_{\e_2}
$$ (equal to identity on $\Gamma$),  and hence a group isomorphism 
$$\psi^{(\e_1,\e_2)}_*: \Div(\Gamma^\omega_{\e_1}) \rightarrow \Div(\Gamma^\omega_{\e_2});
\quad \quad \quad\quad
\sum_{p\in \Gamma}  n_p p \mapsto \sum_{p\in \Gamma}  n_p \psi^{(\e_1,\e_2)}(p).
$$ 
Note that $\psi^{(\e_2,\e_1)}_*$ is the inverse of $\psi^{(\e_1,\e_2)}_*$,  and that
 $\psi^{(\e_1,\e_2)}_*\circ\iota_{\e_1} = \iota_{\e_2}$;  see \eqref{iotae}.

 Note also that $\psi^{(\e_1,\e_2)}_*$ induces an isomorphism at the level of effective divisors. 
 
 We claim that  $\psi^{(\e_1,\e_2)}_*$ induces an isomorphism also at the level of principal divisors. By Remark~\ref{trivrk}, the claim implies part~\eqref{RRwc1}.
 
 To prove the claim, let $f$ be a rational  function on $\Gamma_{\e_1}^\omega$. Let $\alpha: \mathbb R \rightarrow \mathbb R$ be the homothety of ratio $\e_2/\e_1$ on $\mathbb R$, i.e., the automorphism of $\R$ given by $\alpha(x) = x\e_2/\e_1 $ for any $x\in \mathbb R$.  Define the function $\alpha\bullet  f $ on 
 $\Gamma^\omega_{\e_1}$ by   requiring that for any point of $x\in \Gamma$, $\alpha\bullet  f(x) =f(x)$, and for any point $u$ of a  virtual  loop of $\Gamma^\omega_{\e_1}$ attached at the point $v\in \Gamma$  we set
 $$\alpha\bullet  f(u) = f(v)+\alpha(f(u)-f(v)).$$
 The claim now follows by observing that $(\alpha\bullet  f)\circ \psi^{(\e_2,\e_1)}$ is a rational function on $\Gamma^\omega_{\e_2}$, and 
  $$\div((\alpha\bullet  f)\circ \psi^{(\e_2,\e_1)}) = \psi_*\, ^ {(\e_1,\e_2)}(\div(f)).$$ 

Part~\eqref{RRwc1} is proved.

To prove part \eqref{RRwc2}, recall that, as we said before, 
for the pure tropical curves ${\Gamma}_\e^{\omega} $ the   Riemann-Roch theorem holds, and
hence this part follows from the previous one.
\end{proof}
 \begin{remark}
It is clear from the proof of Theorem~\ref{RRwc} that there is no need to fix the same $\epsilon$ for all the virtual cycles. More precisely,  fix an ordering    for the virtual cycles of $G^{\omega}$  and for their edges; recall there are $\sum_{v\in V(G)} \omega(v)$ of them. Then for any $\underline{\epsilon}\in \R^{\sum \omega(v)}_{>0}$ we can define the pure tropical curve 
$\Gamma^{\omega} _{\underline{\e}}$ using ${\underline{\e}}$ to define the length on the virtual cycles in the obvious way. Then   for any $D\in \Div (\Gamma)$ the number
$r_{\Gamma_{\underline{\e}}^{\omega} }(\iota_{{\underline{\e}}}(D))$ is independent of    ${\underline{\e}}$ (where $\iota_{\underline{\e}}$ is the analog of \eqref{iotae}).
\end{remark}


\begin{thebibliography}{EGKH02}
  
\bibitem[ACG]{gac} Arbarello, E.;   Cornalba, M.;   and  Griffiths, P. A.: 
\emph{ Geometry of algebraic curves. Vol. II.} With a contribution by  Harris J. D.
Grundlehren der Mathematischen Wissenschaften 268, 
Springer-Verlag (2011).

\bibitem[BN1]{BN}Baker, M.; Norine, S.: {\it Riemann-Roch and Abel-Jacobi theory on a finite graph.} Adv. Math.  215  (2007),  no. 2, 766--788.

 \bibitem[B]{bakersp} Baker, M.:
 {\it Specialization of linear systems from curves to graphs.}
 Algebra and Number Theory 2 (2008), no. 6, 613--653. 
\bibitem[BN2]{BN2}Baker, M.; Norine, S. {\it Harmonic Morphisms and Hyperelliptic graphs.} IMRN (2009),  no. 15, 2914--2955.
 
\bibitem[BMV]{BMV} Brannetti, S.; Melo, M.; Viviani, F.: \emph{ On the tropical  Torelli map.} 
Adv. in Math.  
226 (2011), 2546--2586. 
 



\bibitem[C1]{cner}
 Caporaso,  L.:
\newblock {\it N\'eron models and compactified Picard schemes over the moduli
stack of stable curves.}
\newblock American Journal of Mathematics, Vol 130 (2008) 1--47. 


 \bibitem[C2]{CHBK} Caporaso, L.: \emph{Algebraic  and tropical curves: comparing their moduli spaces.} 
 To appear in the Volume:  Handbook of Moduli. Edited by G. Farkas and I. Morrison. Available at  arxiv:1101.4821,
\bibitem[C3]{CBNgraph} Caporaso, L.: \emph{Algebraic and combinatorial Brill-Noether theory.}
 Compact moduli spaces and vector bundles.
Contemporary Mathematics Volume 564, p. 69--85.
Edited by V. Alexeev, E. Izadi, A. Gibney, J. Koll\`ar, E. Loojenga. AMS Bookstore. 
 

\bibitem[CDPR] {CDPR} Cools, F.;  Draisma, J.;   Payne, S.;  Robeva, E.:\emph{ A tropical proof of the Brill-Noether Theorem.} Adv. in Math. 230 (2012), 759--776.
 

\bibitem[GK] {GK} Gathmann, A.; Kerber, M.:\emph{ A Riemann-Roch theorem in tropical geometry.} Math. Z. 259 (2008), no. 1, 217--230.


\bibitem[HKN]{HKN}Hladk\'y J., Kr\`al' D.; Norine S.: \textsl{Rank of divisors on tropical curves.} Preprint. Available at arXiv:0709.4485.

\bibitem[Le]{len} Len, Y.:
\emph{First steps in tropical Brill-Noether theory.} Preprint.

\bibitem[Lu] {luo} Luo, Y.: \emph{Rank-determining sets of metric graphs.} 
J. Comb. Theory, Ser. A 118 (2011), no. 6, 1775-1793.


\bibitem[MZ]{MZ} Mikhalkin; G., Zharkov, I.:
\emph{ Tropical curves, their Jacobians and Theta functions}.
Contemporary Mathematics 465: Proceedings of the International Conference on Curves and Abelian Varieties in honor of Roy Smith's 65th birthday. 203-231.

\end{thebibliography}
\end{document}